\theoremstyle{plain}
\newtheorem{thm}{Theorem}[section]
\newtheorem{lem}[thm]{Lemma}
\newtheorem{cor}[thm]{Corollary}
\newtheorem{prop}[thm]{Proposition}
\newtheorem{clm}[thm]{Claim}
\theoremstyle{definition}
\newcommand{\sm}{\ensuremath{\setminus}}
\newcommand{\set}[1]{\ensuremath{\{#1\}}}
\newcommand{\diam}{\textnormal{diam}}
\newcommand{\rad}{\textnormal{rad}}
\newcommand{\asdim}{\textnormal{asdim}}
\newcommand{\isom}{\ensuremath{\cong}}
\newcommand{\inv}{\ensuremath{^{-1}}}
\newcommand{\rand}{\partial}
\newcommand{\sub}{\subseteq}
\newcommand{\seq}[3]{(#1_#2)_{#2\in #3}}
\newcommand{\nat}{{\mathbb N}}
\newcommand{\real}{{\mathbb R}}
\newcommand{\BF}{\ensuremath{\mathcal B}}
\newcommand{\PF}{\ensuremath{\mathcal P}}
\newcommand{\TF}{\ensuremath{\mathcal T}}
\newcommand{\UF}{\ensuremath{\mathcal U}}
\newcommand{\VF}{\ensuremath{\mathcal V}}
\newcommand{\YF}{\ensuremath{\mathcal Y}}
\begin{document}

\title{On the tree-likeness of hyperbolic spaces}
\author{Matthias Hamann\thanks{Partially supported by FWF grant P-19115-N18.}\bigskip \\Fachbereich Mathematik\\Universit\"at Hamburg}
\date{\today}
\maketitle
\begin{abstract}
In proper hyperbolic geodetic spaces we construct rooted $\real$-trees with the following properties.
On the one hand, every ray starting at the root is quasi-geodetic; so these $\real$-trees represent the space itself well.
At the same time, the trees boundary reflects the boundary of the space in that the number of disjoint rays to a boundary point is bounded in terms of the (Assouad) dimension of the hyperbolic boundary.
\end{abstract}

\section{Introduction}

Since Gromov's article on hyperbolic groups \cite{gromov} appeared, there have been various attempts to describe a given hyperbolic space by comparing it with the simplest form of a hyperbolic space, an $\real$-tree.

On the one hand, there are results that construct for a given hyperbolic space an $\real$-tree whose local structure resembles the local structure of the hyperbolic space.
The best known among these are results attributed to Gromov (see \cite[Chapitre~8]{CoornDelPapa} and \cite[\S2.2]{GhHaSur}) that construct for a finite subset of the completion of a $\delta$-hyperbolic space an $\real$-tree in the space whose completion contains the given set and such that all of its geodesics between elements of the finite set are quasi-geodesics in the hyperbolic space for constants that depend only on the size of the set and on~$\delta$.
There is also a result by Benjamini and Schramm~\cite[Theorem 1.5]{BenjaminiSchramm-Cheeger} for locally finite hyperbolic graphs which states that there exists a subtree with exponential growth such that the embedding is a bilipschitz map.

On the other hand there are constructions of $\real$-trees that try to capture the boundary of a given hyperbolic space.
When the space is a locally finite graph, then several ideas for such constructions can already be found in Gromov's article~\cite[Sections 7.6, 8.5.B, and 8.5.C]{gromov}.
They have been elaborated on in~\cite[Chapter~5]{SymbolicDynamics}.
These trees capture the boundary of the hyperbolic graph in that there is a continuous map from their own boundary onto that of the graph.
However, these trees are not necessarily subtrees of the hyperbolic graph.
If the hyperbolic graph has bounded degree, then some of these maps are finite-to-one.
In~\cite{HyperbolicSpanningTrees} the author showed that inside every hyperbolic graphs of bounded degree there exists a spanning whose boundary maps continuously and finite-to-one onto the boundary of the graph.

\medskip

In this article, we combine these two approaches.
Inside every proper hyperbolic geodetic space $X$ whose boundary has finite Assouad dimension we construct a rooted $\real$-tree $T$ such that
\begin{enumerate}[$\bullet$]
\item the rays from the root are all quasi-geodetic rays (for the same global constants) and
\item a continuous finite-to-one map from the boundary of the tree onto the one of the hyperbolic space exists where the bound only depends on the Assouad dimension of the hyperbolic boundary.
\end{enumerate}

The assumption that the hyperbolic boundary has finite Assouad dimension is not a strong assumption.
For example, if the space is a locally finite hyperbolic graph of bounded degree, e.g.\ if the graph is the Cayley graph of a finitely generated hyperbolic group with a finite set of generators.
If the hyperbolic space is visual, that is roughly speaking that every points has bounded distance to a geodetic double ray (see Section~\ref{sec_DefVisual} for more details), then every point of the space has distance at most some constant $\kappa$ from the $\real$-tree (Theorem~\ref{thm_mainPart1}).
If we consider an arbitrary proper hyperbolic geodetic space, then a $\kappa$ exists with the property that every geodesic outside the described set has finite length (Theorem~\ref{thm_mainPart2}).

\medskip

Different approaches exhibiting the tree-likeness of hyperbolic spaces include quasi-isometric embeddings of visual hyperbolic spaces into the product of $\real$-trees, see Buyalo et al.\ \cite{BDS-Embedding}, or the sub-cones at infinity, see~\cite[Proposition~2.1.11]{GhHaSur} and \cite[Lemme~5.6]{Paulin}.

\medskip

In the final section we give a proof that for any continuous function from the boundary of an $\real$-tree to the boundary of a hyperbolic space the number of preimages of a boundary point of the space is bounded from below by a function that depends only on the topological dimension of the hyperbolic space.

\section{Hyperbolic spaces}\label{hypGraphs}\label{sec_hypGraphs}

In this section we define properties for metric spaces, in particular, for hyperbolic spaces and cite some of their properties. For a more detailed introduction to hyperbolicity, we refer to \cite{ABCFLMSS,CoornDelPapa,GhHaSur,gromov,Ohshika} as well as \cite[Chapter~III.H]{BridsonHaefliger} and \cite[Chapter~22]{woessBook}.

\medskip

Let $X$ be a metric space.
A {\em geodesic} between two points $x,y\in X$ is the image of an isometric map $\varphi:[0,d(x,y)]\to X$ with $\varphi(0)=x$ and $\varphi(d(x,y))=y$.
With $[x,y]$ we denote any geodesic between $x$ and $y$.
If we want to specify the particular metric space $X$, then we write $[x,y]_X$.
The space $X$ is {\em geodetic} if for any two points $x,y\in X$ there exists a geodesic in~$X$ between them.
It is {\em proper} if for every $r\in\nat$ and $x\in X$ the closed ball $\bar{B}_r(x)$ is compact.
If there is a $\delta\geq 0$ such that for any three points $x,y,z$ and any geodesics $[x,y],[y,z],[z,x]$ between each two of the points there is $[x,z]\sub \bar{B}_\delta([x,y]\cup[y,z])$ then we call the space {\em ($\delta$-)hyperbolic} and $\delta$ is the {\em hyperbolicity constant}.

\medskip

Homeomorphic images of~$[0,1]$ are called {\em paths}.
A {\em ray} is a homeomorphic image $R$ of~$[0,\infty)$ such that for every ball of finite diameter $R$ lies eventually outside that ball.
{\em Double rays} are homeomorphic images of~$\real$ such that the restrictions to~$\real_{\geq0}$ and to~$\real_{\leq0}$ are both rays.
A (double) ray is {\em geodetic} if it is an isometric image of~$[0,\infty)$ (of~$\real$).
A ray is {\em eventually geodetic} if it has a geodetic subray.

Since we are looking at the hyperbolic boundary from distinct viewpoints, we state here three different definitions of the hyperbolic boundary all of which are equivalent.
Two geodetic rays $\pi_1,\pi_2$ are {\em equivalent} if for any sequence $(x_n)_{n\in\nat}$ of points on~$\pi_1$ we have $\liminf_{n\to\infty}d(x_n,\pi_2)\leq M$ for an $M<\infty$.
In hyperbolic geodetic spaces, this is an equivalence relation, compare with \cite[Section~2.4.2]{BS-Elements}.
The {\em hyperbolic boundary} $\rand X$ is the set of all equivalence classes of this relation.
With $\widehat{X}$ we denote $X\cup\rand X$.

The {\em Gromov-product} (with respect to $o\in X$) of two elements $x,y\in X$ is
$$(x,y)_o:=\frac{1}{2}(d(x,o)+d(y,o)-d(x,y)).$$
If it is obvious by the context which point we use as the base-point for the product, we simply write $(x,y)$.

Now we give the second topological definition of the hyperbolic boundary.
A sequence $(x_i)_{i\geq 0}$ {\em converges to a point} $x$ if $\lim_{i\to\infty}(x_i,x)=0$.
A sequence $(x_i)_{i\geq 0}$ {\em converges to infinity} if $\lim_{i,j\to\infty}(x_i,x_j)\to\infty$.
Two sequences $(x_i)_{i\geq 0}$, $(y_j)_{j\geq 0}$ that converge to infinity are {\em equivalent} if $\lim_{i,j\to\infty}(x_i,y_j)=\infty$.
In hyperbolic geodetic spaces this equivalence is an equivalence relation.
The {\em hyperbolic boundary} is the set of equivalence classes of this equivalence relation.
A sequence $(x_i)_{i\geq 0}$ {\em tends} to a boundary point $\eta$ if it is in the equivalence class $\eta$ (notation: $(x_i)_{i\geq 0}\to\eta$).
In~\cite{GhHaSur} the equivalence of this definition with the first one given is shown.

A third way to define the hyperbolic boundary is via the completion defined by a metric $d_\varepsilon$.
Let $\varepsilon>0$ with $\varepsilon':=\exp(\varepsilon\delta)-1\leq\sqrt{2}-1$.
Let
\[d_\varepsilon(x,y):=\inf\set{\sum_{i=1}^n\exp(-\varepsilon(x_{i-1},x_i))\mid x_i\in X, x_0=x,x_n=y}.\]
Then $d_\varepsilon$ is a metric on~$X$.
The {\em hyperbolic boundary} is the completion of~$X$ with respect to this metric without~$X$.
For a proof of the equivalence of this definition with the previous ones see~\cite[Proposition~7.3.10]{GhHaSur}.

An important theorem about the hyperbolic boundary is the following.
For references see~\cite[Section~6]{BonkSchramm_Embeddings}, \cite[Proposition~2.3.2]{CoornDelPapa}, and \cite[Corollary~2.65]{Ohshika}.

\begin{thm}\label{BoundComp}\label{MetricTopologyOfBoundary}
If $X$ is a proper geodetic hyperbolic space, then the hyperbolic boundary is compact for all metrics $d_\varepsilon$ with $\varepsilon>0$ and $\exp(\varepsilon\delta)\leq\sqrt{2}$.

Furthermore, for all $\eta,\mu\in\rand X$ and with $\varepsilon'=\exp(\varepsilon\delta)-1$ there is

\hfill$\,\,\varepsilon'\exp(-\varepsilon\,(\eta,\mu))\le d_\varepsilon(\eta,\mu)\leq\exp(-\varepsilon\,(\eta,\mu)).$\qed
\end{thm}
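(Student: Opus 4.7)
My plan is to handle both assertions using a single technical tool: the quasi-ultrametric behavior of $\rho(x,y):=\exp(-\varepsilon(x,y)_o)$. Hyperbolicity gives the four-point inequality $(x,z)_o\geq\min\{(x,y)_o,(y,z)_o\}-\delta$, which on multiplying by $-\varepsilon$ and exponentiating (using $\exp(\varepsilon\delta)=1+\varepsilon'$) becomes
\[\rho(x,z)\leq(1+\varepsilon')\max\{\rho(x,y),\rho(y,z)\}.\]
Once this is established, the compactness statement and the two-sided inequality both follow from essentially elementary considerations.

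I would first prove the two-sided bound on $X$ itself. The upper estimate $d_\varepsilon(x,y)\leq\rho(x,y)$ is immediate from the trivial chain $x,y$. For the lower estimate I claim that every chain $x_0,\dots,x_n$ with sum $S=\sum_i\rho(x_{i-1},x_i)$ satisfies $\rho(x_0,x_n)\leq S/\varepsilon'$; this is proved by induction on $n$. For $n=1$ it is immediate. For $n\geq 2$ pick the index $j$ with $\sum_{i\leq j}\rho_i\leq S/2<\sum_{i\leq j+1}\rho_i$, which forces $\sum_{i\geq j+2}\rho_i\leq S/2$ and $\rho_{j+1}\leq S$. The inductive hypothesis bounds $\rho(x_0,x_j)$ and $\rho(x_{j+1},x_n)$ each by $S/(2\varepsilon')$, and since $\varepsilon'\leq\sqrt{2}-1<\tfrac12$ this is already at least $S$; two applications of the quasi-ultrametric inequality then yield $\rho(x_0,x_n)\leq(1+\varepsilon')^2 S/(2\varepsilon')$. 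This is at most $S/\varepsilon'$ precisely when $(1+\varepsilon')^2\leq 2$, which is exactly the standing hypothesis. (The degenerate cases $j=0$ and $j+1=n$ give the easier bound $(1+\varepsilon')S/(2\varepsilon')$.) Taking the infimum over chains gives $\varepsilon'\rho(x,y)\leq d_\varepsilon(x,y)$ on $X$.

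To pass to $\rand X$, extend the Gromov product by $(\eta,\mu)_o:=\sup\liminf_{i,j}(x_i,y_j)_o$ over sequences $x_i\to\eta$, $y_j\to\mu$; hyperbolicity makes this well-defined up to a bounded additive error. For realizing sequences, $d_\varepsilon(x_i,\eta)\to 0$ and $d_\varepsilon(y_j,\mu)\to 0$ by construction of the completion, so the triangle inequality for $d_\varepsilon$ together with the bounds on $X$ transports both inequalities to $\rand X$. For the compactness assertion, fix a base-point $o\in X$ and a sequence $(\xi_n)$ in $\rand X$. Properness together with Arzel\`a--Ascoli applied to geodesic segments $[o,x_{n,k}]$ for points $x_{n,k}\to\xi_n$ in $X$ produces a geodetic ray $\gamma_n$ from $o$ representing $\xi_n$. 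Since $\gamma_n(k)\in\bar B_k(o)$ and each such ball is compact, a diagonal argument yields a subsequence along which $\gamma_n(k)$ converges for every integer $k$; the limits fit together into a geodetic ray $\gamma$ whose endpoint $\xi\in\rand X$ is the desired cluster point, because agreement of $\gamma_n$ with $\gamma$ up to time $k$ forces $(\xi,\xi_n)_o\geq k-O(\delta)$, and the upper bound then gives $d_\varepsilon(\xi,\xi_n)\to 0$.

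The main obstacle is the chain-induction: the correct split is at the \emph{median of the weights} rather than the median of the indices, and the factor $(1+\varepsilon')^2/(2\varepsilon')$ produced by the two quasi-ultrametric steps just barely closes the recursion under $\exp(\varepsilon\delta)\leq\sqrt{2}$, so any weakening of the hypothesis would break the argument at exactly this point. Everything else is a routine consequence of properness and the two-sided inequality on $X$.
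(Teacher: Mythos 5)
Your proof is correct, and it is worth noting at the outset that the paper itself offers no argument for this theorem: it is stated with a \qed and a pointer to Bonk--Schramm, Coornaert--Delzant--Papadopoulos and Ohshika, so there is no in-text proof to compare against. What you have written is essentially the standard argument from those sources: the quasi-ultrametric inequality $\rho(x,z)\le(1+\varepsilon')\max\{\rho(x,y),\rho(y,z)\}$ for $\rho=\exp(-\varepsilon(\cdot,\cdot)_o)$, the chain infimum, and the median-of-weights induction. I checked the induction, including the degenerate cases $j=0$ and $j+1=n$; the two applications of the $(1+\varepsilon')$-inequality close the recursion exactly under $(1+\varepsilon')^2\le 2$ and do yield the lower constant $\varepsilon'$ appearing in the statement (which is sharper than the constant $1-2\varepsilon'$ one often sees quoted, so it is good that you verified it rather than citing it). The compactness argument via Arzel\`a--Ascoli, diagonal extraction in the compact balls $\bar B_k(o)$, and the estimate $d_\varepsilon(\xi,\xi_{n})\le\exp(-\varepsilon(\xi,\xi_{n})_o)$ is likewise the standard one and is fine. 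Two small points to tidy up. First, the paper defines $\delta$-hyperbolicity by the thin-triangles condition, whereas your argument needs the four-point inequality $(x,z)_o\ge\min\{(x,y)_o,(y,z)_o\}-\delta$ with the \emph{same} $\delta$; these constants agree only up to a bounded factor, so strictly the hypothesis $\exp(\varepsilon\delta)\le\sqrt2$ must be read with $\delta$ the four-point constant, as the cited references do. Second, you extend the Gromov product to $\rand X$ by $\sup\liminf$ while the paper uses $\inf\liminf$; the two differ by at most $2\delta$, and your limiting step (pass to a pair of sequences realizing the extremum, along which $d_\varepsilon(x_i,\eta)\to0$ and $(x_i,y_j)\to(\eta,\mu)$, then transport both inequalities from $X$) works verbatim for the paper's convention, so this is a matter of matching conventions rather than a gap.
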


Geodetic (double) rays play an important role in the context of hyperbolic geodetic spaces, as we already saw in the first definition of the hyperbolic boundary.
The following proposition shows that there are plenty of them.

\begin{prop}\label{prop_geodExists}{\em \cite[Proposition~2.60]{Ohshika}, \cite[Proposition~2.2.1]{CoornDelPapa}}
Let $X$ be a proper hyperbolic geodetic space.
For every $x\in X$ and every hyperbolic boundary point there is a geodetic ray from $x$ to that boundary point, and for every two distinct hyperbolic boundary points there is a geodetic double ray between these two boundary points.\qed
\end{prop}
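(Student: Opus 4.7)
\medskip

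The plan is to construct both objects as limits of geodesic segments via an Arzel\`a--Ascoli argument, which is available because $X$ is proper.

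For the first claim, fix $x\in X$ and $\eta\in\rand X$ and choose a sequence $(x_n)_{n\in\nat}$ in~$X$ tending to~$\eta$. For each~$n$, pick a geodesic $\gamma_n\colon[0,d(x,x_n)]\to X$ from~$x$ to~$x_n$ and extend $\gamma_n$ to a $1$-Lipschitz map on $[0,\infty)$ by setting $\gamma_n(t):=x_n$ for $t\geq d(x,x_n)$. The family $(\gamma_n)$ is equicontinuous, and $\gamma_n(t)\in\bar B_t(x)$ for all $n$ and~$t$, which is compact by properness of~$X$. A diagonal extraction produces a subsequence converging uniformly on compact subsets of~$[0,\infty)$ to a $1$-Lipschitz map $\gamma\colon[0,\infty)\to X$ with $\gamma(0)=x$. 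Since $d(x,x_n)\to\infty$, for any fixed $s,t\geq 0$ we have $d(\gamma_n(s),\gamma_n(t))=|s-t|$ for all sufficiently large~$n$, so the limit satisfies $d(\gamma(s),\gamma(t))=|s-t|$, and $\gamma$ is a geodetic ray from~$x$.

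For the second claim, fix distinct $\eta,\mu\in\rand X$, pick $o\in X$, and choose sequences $(x_n)\to\eta$ and $(y_n)\to\mu$. Because $\eta\neq\mu$, after passing to a subsequence we may assume the Gromov products $(x_n,y_n)_o$ are bounded by some constant~$K$. Pick a geodesic $\sigma_n$ from~$x_n$ to~$y_n$ and let $p_n\in\sigma_n$ be a point closest to~$o$; the standard hyperbolic inequality $d(o,\sigma_n)\leq(x_n,y_n)_o+2\delta$ then yields $d(o,p_n)\leq K+2\delta$, so the points~$p_n$ lie in one compact ball. Parameterise $\sigma_n\colon[a_n,b_n]\to X$ with $\sigma_n(0)=p_n$; the triangle inequality combined with $d(o,x_n),d(o,y_n)\to\infty$ gives $a_n\to-\infty$ and $b_n\to\infty$. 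Extending each $\sigma_n$ constantly outside its domain and running Arzel\`a--Ascoli once more produces a $1$-Lipschitz limit $\sigma\colon\real\to X$ which, as in the first part, is a geodetic double ray.

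The main obstacle is the step I have glossed over: verifying that $\gamma$ represents~$\eta$ and that the two ends of~$\sigma$ represent~$\eta$ and~$\mu$. This amounts to translating the geometric (uniform-on-compacta) limit into the Gromov-product definition of convergence on~$\rand X$. For the ray, one shows along any $t_k\to\infty$ that the products $(\gamma(t_k),x_m)_x$ grow without bound by controlling $(\gamma(t_k),\gamma_n(t_k))_x$ via the uniform convergence and then invoking the $\delta$-hyperbolic inequality $(a,c)\geq\min\{(a,b),(b,c)\}-\delta$. The same idea treats the double ray, with the bound $(x_n,y_n)_o\leq K$ preventing the two ends from collapsing to the same boundary point.
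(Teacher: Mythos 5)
Your argument is correct and is essentially the standard proof: the paper itself offers no proof but defers to \cite[Proposition~2.60]{Ohshika} and \cite[Proposition~2.2.1]{CoornDelPapa}, where exactly this Arzel\`a--Ascoli limit of geodesic segments (anchored, in the double-ray case, by the bound $d(o,\sigma_n)\leq(x_n,y_n)_o+2\delta$) is used. The endpoint-identification step you flag is the only real bookkeeping, and your sketch of it via the $\delta$-inequality for Gromov products is the right way to close it.
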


Let $\gamma>1,c\geq 0$.
A {\em $(\gamma,c)$-quasi-isometry} from $X$ to another metric space $Y$ is a map $f:X\to Y$ with
$$\gamma^{-1}d_X(x,y)-c\leq d_Y(f(x),f(y))\leq\gamma d_X(x,y)+c$$
for all $x,y\in X$ and with $\sup\{d_Y(y,f(X))\mid y\in Y\}\leq b$.
Then $X$ is {\em quasi-isometric} to~$Y$.
A (double) ray $R$ is {\em $(\gamma,c)$-quasi-geodetic} if it is the image of a $(\gamma,c)$-quasi-isometry from $\real_{\geq 0}$ ($\real$, resp.)\ to~$R$.
Hence a (double) ray is geodetic, if it is a $(1,0)$-quasi-geodetic (double) ray.
If the constants $\gamma,c$ are not important, then we just speak of {\em quasi-geodesics}.

The next proposition shows that in every proper hyperbolic geodetic space the geodesics and quasi-geodesics lie close to each other.

\begin{prop}\label{prop_geodAndQuasigeod}{\em \cite[Theorem~2.31]{Ohshika}, \cite[Th\'eor\`eme~3.1.4]{CoornDelPapa}}
Let $X$ be a proper $\delta$-hyperbolic geodetic space.
For all $\gamma_1\geq 1,\gamma_2\geq 0$ there is a constant $\kappa=\kappa(\delta,\gamma_1,\gamma_2)$ such that for every two points $x,y\in X$ every $(\gamma_1,\gamma_2)$-quasi-geodesic between them lies in a $\kappa$-neighborhood around every geodesic between $x$ and $y$ and vice versa.

Furthermore, this extends to $(\gamma_1,\gamma_2)$-quasi-geodetic and geodetic (double) rays.\qed
\end{prop}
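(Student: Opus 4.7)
The statement is the Morse-type stability lemma for quasi-geodesics in hyperbolic geodetic spaces, and my plan follows the classical route via a logarithmic detour estimate. First I would replace the given $(\gamma_1,\gamma_2)$-quasi-geodesic $q$ by a continuous $(\gamma_1',\gamma_2')$-quasi-geodesic $\tilde q$, obtained by interpolating $q$ on each unit parameter interval by a geodesic segment of $X$; a short estimate shows that the new constants depend only on $\gamma_1,\gamma_2$ and that $q$ and $\tilde q$ lie at Hausdorff distance bounded in the same terms. It therefore suffices to compare $\tilde q$ and any geodesic $[x,y]$.

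The heart of the proof is the \emph{detour estimate}: in a $\delta$-hyperbolic geodetic space, for every continuous path $c\colon[0,L]\to X$ from $u$ to $v$ of length $\ell$ and every $p$ on a geodesic $[u,v]$, one has $d(p,\Image c)\le\delta\log_2\ell+O(\delta)$. This is proved by repeatedly bisecting $c$ and applying the thin-triangles axiom: each halving pushes $p$ by at most $\delta$ towards one of the halves, so after $\log_2\ell$ bisections one is left with a sub-curve of length at most one whose image lies within $O(\delta)$ of $p$. To deduce the proposition, let $D:=\max_{p\in[x,y]}d(p,\Image\tilde q)$, realised at some $p$; let $[a,b]\ni p$ be the maximal subsegment of $[x,y]$ on which $d(\cdot,\Image\tilde q)\ge D/2$; and pick $a',b'\in\Image\tilde q$ with $d(a,a'),d(b,b')\le D/2$. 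A triangle-inequality argument forces $d(a,b)\ge D$ and $d(p,[a,a']),d(p,[b',b])\ge D/2$, so the detour estimate applied to the concatenated path $[a,a']\cup\tilde q|_{[a',b']}\cup[b',b]$ yields $D/2\le\delta\log_2\ell_0+O(\delta)$, where $\ell_0\le\gamma_1'(d(a,b)+D)+\gamma_2'+D$. Combining this logarithmic upper bound with the exponential divergence property applied to the same sub-arc of $\tilde q$---which must detour around the ball $B(p,D-O(\delta))$---turns the log-versus-linear gap into a uniform bound $D\le\kappa_0(\delta,\gamma_1,\gamma_2)$.

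The reverse inclusion $\Image\tilde q\subseteq\{z\in X\mid d(z,[x,y])\le\kappa\}$ then follows by a connectedness argument: $\tilde q$ begins and ends on $[x,y]$, so any maximal sub-arc leaving the $\kappa_0$-neighbourhood of $[x,y]$ has both endpoints within $\kappa_0$ of $[x,y]$ and hence, by the quasi-geodetic inequality, bounded length, bounding also its diameter. Finally, the extension to geodetic and quasi-geodetic \emph{(double) rays} is handled by applying the finite version to longer and longer sub-segments and using properness of $X$ to extract a convergent subsequence of nearest-point projections. I expect the main obstacle to be the calibration in the second paragraph: the naive logarithmic bound $D=O(\delta\log d(a,b))$ alone does not yield a uniform bound on $D$, and one must combine it either with exponential divergence of geodesics in hyperbolic spaces or with an iterative refinement on a shrinking sequence of sub-segments in order to close the estimate genuinely uniformly in $x,y$.
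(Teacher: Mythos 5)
The paper does not actually prove this proposition: it is stated with a \qed and attributed to \cite{Ohshika} and \cite{CoornDelPapa}, so there is no in-paper argument to measure yours against. Your outline is the classical Morse/stability proof --- taming the quasi-geodesic to a continuous one, the logarithmic detour estimate by repeated bisection and thinness of triangles, the reverse inclusion by a connectedness argument on excursions, and the passage to (double) rays via exhaustion by finite subsegments and properness --- and in that sense it is the right proof and consistent with the cited sources.

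The one point that does not close as written is exactly the calibration you flagged. If $[a,b]$ is the maximal subsegment of $[x,y]$ on which $d(\cdot,\Image\tilde q)\ge D/2$, then you only know $d(p,a),d(p,b)\ge D/2$ and $d(a,a'),d(b,b')\le D/2$; consequently your claim $d(p,[a,a'])\ge D/2$ fails (a point of $[a,a']$ is only guaranteed to be within $D/2$ of $a$, and $d(p,a)$ may equal $D/2$, so the lower bound degenerates to $0$), and, more seriously, $d(a,b)$ --- hence $\ell_0$ --- is not a priori controlled by $D$, so the inequality $D/2\le\delta\log_2\ell_0+O(\delta)$ is not self-improving. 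The standard repair (see \cite[Theorem III.H.1.7]{BridsonHaefliger}) is to truncate metrically rather than by a level set: take $a,b$ on $[x,y]$ at distance exactly $2D$ from $p$ on either side (or the endpoints of $[x,y]$ if these are closer, in which case they lie on $\Image\tilde q$ up to the taming error). Then every point of $[a,a']$ lies within $D$ of $a$, so $d\bigl(p,[a,a']\bigr)\ge 2D-D=D$, likewise for $[b',b]$, and the whole concatenated path $\sigma$ satisfies $d(p,\Image\sigma)\ge D$ while $\ell(\sigma)\le 2D+\gamma_1'\cdot 6D+\gamma_2'$; the detour estimate then gives $D\le\delta\log_2(6\gamma_1'D+2D+\gamma_2')+O(\delta)$, which bounds $D$ purely by the log-versus-linear gap. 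With this choice the appeal to exponential divergence (or to an iterative refinement) becomes unnecessary, and the rest of your argument --- the reverse inclusion and the limiting argument for rays --- goes through as you describe.
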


\begin{prop}\label{LimesOfGromovProd}{\em \cite[(22.4)]{woessBook}}
Let $X$ be a proper $\delta$-hyperbolic geodetic space.
Then for all $x,y,z\in X$ we have

\hfill $\,\,(x,y)_z\leq d(z,[x,y])\leq (x,y)_z+2\delta.$\qed
\end{prop}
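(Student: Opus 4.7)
The statement has two inequalities. The first, $(x,y)_z \le d(z,[x,y])$, is a soft consequence of the triangle inequality and does not use hyperbolicity. The second, $d(z,[x,y]) \le (x,y)_z + 2\delta$, is where the $\delta$-thin triangle condition enters.

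For the lower bound, I would pick an arbitrary point $w\in[x,y]$ and apply the triangle inequality twice to get $d(x,z)\le d(x,w)+d(w,z)$ and $d(y,z)\le d(y,w)+d(w,z)$. Adding these and using that $w$ lies on a geodesic from $x$ to $y$ (so $d(x,w)+d(w,y)=d(x,y)$) yields
\[
d(x,z)+d(y,z)-d(x,y)\le 2\,d(w,z).
\]
Dividing by $2$ gives $(x,y)_z\le d(w,z)$, and since $w\in[x,y]$ was arbitrary, taking the infimum over $[x,y]$ produces $(x,y)_z\le d(z,[x,y])$.

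For the upper bound, I would work with the \emph{internal point} $p\in[x,y]$ determined by $d(x,p)=(y,z)_x$, equivalently $d(y,p)=(x,z)_y$; a short calculation using $(y,z)_x+(x,z)_y=d(x,y)$ shows such a $p$ exists on the geodesic $[x,y]$. Applying $\delta$-hyperbolicity to the triangle with sides $[x,y]$, $[y,z]$, $[x,z]$, I can find a point $p'\in[x,z]\cup[y,z]$ with $d(p,p')\le\delta$. I then split into the two cases according to which side contains $p'$.

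In the first case $p'\in[x,z]$: from $d(x,p')\ge d(x,p)-d(p,p')\ge(y,z)_x-\delta$ and the fact that $p'$ lies on the geodesic from $x$ to $z$, I obtain
\[
d(p',z)=d(x,z)-d(x,p')\le d(x,z)-(y,z)_x+\delta=(x,y)_z+\delta.
\]
The case $p'\in[y,z]$ is symmetric, using $d(y,p')\ge(x,z)_y-\delta$ and the identity $d(y,z)-(x,z)_y=(x,y)_z$. Either way the triangle inequality gives
\[
d(z,[x,y])\le d(z,p)\le d(z,p')+d(p',p)\le (x,y)_z+2\delta.
\]
The main subtlety is the identification of the internal point and the bookkeeping that the distances $d(x,p')$ and $d(y,p')$ along the respective geodesics are close to the prescribed Gromov products; everything else is a mechanical combination of the triangle inequality with the definition of $(x,y)_z$.
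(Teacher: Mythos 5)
Your proof is correct. The paper states this proposition as a citation of \cite[(22.4)]{woessBook} and gives no proof of its own, so there is nothing internal to compare against; your argument --- the triangle-inequality computation for the lower bound, and for the upper bound the internal point $p\in[x,y]$ with $d(x,p)=(y,z)_x$ pushed onto $[x,z]\cup[y,z]$ by the thin-triangle condition --- is exactly the standard proof found in the cited sources, and all the identities you use (in particular $d(x,z)-(y,z)_x=(x,y)_z$) check out.
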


We may extend to definition of the Gromov-product to~$\widehat{X}$.
For $a,b\in\widehat{X}$ let
$$(a,b):=\inf\liminf_{i,j\to\infty}(x_i,y_j)$$
where the infimum is taken over all sequences $(x_i)_{i\geq 0}\to a$ and $(y_i)_{i\geq 0}\to b$.

Combining Proposition~\ref{prop_geodAndQuasigeod} and \cite[Lemma~2.2.2]{BS-Elements} we obtain the following proposition.

\begin{prop}\label{geodIn4Delta}
Let $X$ be a proper $\delta$-hyperbolic geodetic space, let $\eta,\nu\in\rand X$, and let $o\in X$.
For all geodetic double rays $\pi$ from $\eta$ to~$\nu$ we have

\hfill$\,\,(\eta,\nu)_o\leq d(o,\pi)\leq (\eta,\nu)_o+4\delta.$\qed
\end{prop}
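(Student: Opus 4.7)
The plan is to reduce the statement to its finite analogue Proposition~\ref{LimesOfGromovProd} by exhausting the double ray $\pi$ with geodesic subsegments, and then to bridge from Gromov-products of points on $\pi$ to the boundary Gromov-product via the cited Bonk-Schramm lemma.

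First I would parametrise $\pi$ isometrically as $\pi\colon\real\to X$ with $\pi(t)\to\eta$ as $t\to-\infty$ and $\pi(t)\to\nu$ as $t\to+\infty$, and set $x_n:=\pi(-n)$, $y_n:=\pi(n)$. Because $\pi$ is a geodetic double ray, the arc $\pi([-n,n])$ is itself a geodesic between $x_n$ and $y_n$, and is therefore an allowed choice of $[x_n,y_n]$. Proposition~\ref{LimesOfGromovProd} applied to this choice gives
$$(x_n,y_n)_o\leq d(o,\pi([-n,n]))\leq (x_n,y_n)_o+2\delta.$$
Since $X$ is proper and a geodetic ray exits every bounded set, the continuous function $t\mapsto d(o,\pi(t))$ tends to infinity at both ends and so attains its minimum at some $t_0\in\real$. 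Hence for all $n\geq|t_0|$ the set $\pi([-n,n])$ already realises the infimum $d(o,\pi)$, and the displayed inequality becomes
$$(x_n,y_n)_o\leq d(o,\pi)\leq (x_n,y_n)_o+2\delta\quad\text{for $n$ large.}$$

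Next I would compare $(x_n,y_n)_o$ with $(\eta,\nu)_o$. By the definition of the extended Gromov-product, $(\eta,\nu)_o\leq\liminf_{i,j\to\infty}(x_i,y_j)_o\leq\liminf_n(x_n,y_n)_o$, while \cite[Lemma~2.2.2]{BS-Elements} supplies the reverse direction up to an additive $2\delta$, giving $\liminf_n(x_n,y_n)_o\leq(\eta,\nu)_o+2\delta$. Combining with the previous display, the left inequality yields $(\eta,\nu)_o\leq\liminf_n(x_n,y_n)_o\leq d(o,\pi)$, and the right inequality yields $d(o,\pi)\leq\liminf_n(x_n,y_n)_o+2\delta\leq(\eta,\nu)_o+4\delta$, which is the claim.

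Conceptually, the only delicate ingredient is the last step: controlling the boundary Gromov-product by sequence Gromov-products up to an additive $2\delta$. This is a standard consequence of the four-point condition characterising $\delta$-hyperbolicity and is precisely what the Bonk-Schramm lemma packages, so no further work is required beyond citing it. A minor care-point is justifying the existence of the minimum of $t\mapsto d(o,\pi(t))$, which is immediate from properness of $X$. Proposition~\ref{prop_geodAndQuasigeod} would enter only if one preferred to work with arbitrary quasi-geodetic subsegments rather than with the genuinely geodetic ones inherited from $\pi$ itself.
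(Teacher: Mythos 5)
Your proof is correct. Each step checks out: the subsegments $\pi([-n,n])$ are genuine geodesics between $x_n=\pi(-n)$ and $y_n=\pi(n)$, so Proposition~\ref{LimesOfGromovProd} applies to them; the minimum of $t\mapsto d(o,\pi(t))$ is attained because $d(o,\pi(t))\ge |t|-d(o,\pi(0))$ (properness is not even needed for this point); and the two-sided comparison $(\eta,\nu)_o\le\liminf_n(x_n,y_n)_o\le(\eta,\nu)_o+2\delta$ is exactly what the definition of the extended Gromov-product together with \cite[Lemma~2.2.2]{BS-Elements} provides, so the two additive losses of $2\delta$ combine to the asserted $4\delta$.

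The paper does not write out a proof; it only remarks that the statement follows by combining Proposition~\ref{prop_geodAndQuasigeod} with \cite[Lemma~2.2.2]{BS-Elements}. You keep the boundary ingredient (the Buyalo--Schroeder lemma) but replace the stability of quasi-geodesics by the finite estimate of Proposition~\ref{LimesOfGromovProd}, applied to subsegments of $\pi$ itself. This is a genuinely tidier route: because the segments $[x_n,y_n]$ are chosen inside $\pi$, the quantity $d(o,\pi([-n,n]))$ stabilises at $d(o,\pi)$ once $n\ge|t_0|$, so no limiting argument and no comparison between distinct geodesics with the same endpoints is needed; it also delivers the explicit additive constant $4\delta$, which a generic appeal to geodesic stability (with its unspecified constant $\kappa(\delta,\gamma_1,\gamma_2)$) would not by itself produce. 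The route hinted at in the paper would instead have to relate $d(o,[x_i,y_j])$ for geodesics between points of two sequences tending to $\eta$ and $\nu$ back to $d(o,\pi)$, which is precisely the step your choice of subsegments renders unnecessary.
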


\begin{prop}\label{prop_QuotientLeadsToDistance}
Let $X$ be a proper geodetic hyperbolic space with a metric $d_\varepsilon$ as in Theorem~\ref{BoundComp} with $\varepsilon>0$ and $\varepsilon':=\exp(\varepsilon\delta)-1\leq\sqrt{2}-1$.
Let $o\in X$ be the base-point for the Gromov-product of~$X$.
Then, for every $q>0$, there exists a $\beta=\beta(q,\varepsilon)>0$ such that for all $\eta_1,\eta_2,\mu_1,\mu_2\in\rand X$ with $\frac{1}{q}\leq d_\varepsilon(\eta_1,\mu_1)/d_\varepsilon(\eta_2,\mu_2)\leq q$ there is $|d(o,[\eta_1,\mu_1])-d(o,[\eta_2,\mu_2])|\leq \beta$.
\end{prop}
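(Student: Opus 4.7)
The plan is to compose the two given estimates: Theorem~\ref{BoundComp} converts the hypothesis on $d_\varepsilon$ into a bound on the difference of Gromov-products, and Proposition~\ref{geodIn4Delta} converts that difference into one on the distances $d(o,[\eta_i,\mu_i])$.

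First I would rewrite Theorem~\ref{BoundComp} as
\[-\varepsilon(\eta,\mu) + \log\varepsilon' \;\leq\; \log d_\varepsilon(\eta,\mu) \;\leq\; -\varepsilon(\eta,\mu),\]
for every pair $\eta,\mu\in\rand X$, so that taking logs in the hypothesis $\tfrac1q\le d_\varepsilon(\eta_1,\mu_1)/d_\varepsilon(\eta_2,\mu_2)\le q$ and combining the upper bound for the numerator with the lower bound for the denominator (and vice versa) gives
\[\bigl|(\eta_1,\mu_1)_o - (\eta_2,\mu_2)_o\bigr| \;\leq\; \frac{1}{\varepsilon}\bigl(\log q - \log\varepsilon'\bigr).\]
The second ingredient is Proposition~\ref{geodIn4Delta}, which tells us that for any geodetic double ray $[\eta_i,\mu_i]$ we have $(\eta_i,\mu_i)_o \le d(o,[\eta_i,\mu_i]) \le (\eta_i,\mu_i)_o + 4\delta$. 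Subtracting the two lines $i=1,2$ and inserting the displayed estimate, I obtain
\[\bigl|d(o,[\eta_1,\mu_1]) - d(o,[\eta_2,\mu_2])\bigr| \;\leq\; \frac{1}{\varepsilon}\bigl(\log q - \log\varepsilon'\bigr) + 4\delta,\]
so the constant $\beta := \tfrac1\varepsilon(\log q-\log\varepsilon') + 4\delta$ does the job. (Without loss of generality $q\geq 1$, since the hypothesis is vacuous otherwise; combined with $\varepsilon'<1$, this keeps $\beta>0$.)

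There is no real obstacle: the proposition is essentially a formal consequence of the two cited comparison estimates, so the only care required is the bookkeeping that the two-sided bounds of Theorem~\ref{BoundComp} are used with the correct orientations on numerator and denominator (which is what produces the asymmetric constant $\log q - \log\varepsilon'$ rather than merely $\log q$). The additive slack $4\delta$ from Proposition~\ref{geodIn4Delta} simply rides along. Thus the whole argument collapses to two applications of known lemmas; there is no need to invoke properness or to choose an auxiliary base-point, since the Gromov-product with respect to $o$ is exactly the quantity both estimates are phrased in.
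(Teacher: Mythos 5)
Your argument is exactly the paper's: Theorem~\ref{BoundComp} is used to turn the ratio hypothesis into the bound $|(\eta_1,\mu_1)-(\eta_2,\mu_2)|\le\frac{1}{\varepsilon}\ln(q/\varepsilon')$, and Proposition~\ref{geodIn4Delta} then adds the $4\delta$ slack, yielding the same constant $\beta$. The proposal is correct and only spells out more explicitly the bookkeeping the paper leaves implicit.
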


\begin{proof}
By Theorem~\ref{BoundComp} there is
$$\varepsilon'\exp(-\varepsilon(\eta_1,\mu_1))\leq d_\varepsilon(\eta_1,\mu_1)\leq q d_\varepsilon(\eta_2,\mu_2)\leq q\exp(-\varepsilon(\eta_2,\mu_2)).$$
As a consequence we have by symmetry
$$|(\eta_1,\mu_1)-(\eta_2,\mu_2)|\leq\frac{1}{\varepsilon}\ln(\frac{q}{\varepsilon'}).$$
The claim follows immediately with Proposition~\ref{geodIn4Delta}.
\end{proof}

An {\em $\real$-tree} is a metric space $T$ such that for any two points $x,y\in T$ there exists a unique arc between them that is isometric to the interval $[0,d(x,y)]$.
An observation is that $\real$-trees are $0$-hyperbolic geodetic spaces.
The converse direction---that $0$-hyperbolic geodetic spaces are $\real$-trees---is a bit more difficult.
But proofs can be found in nearly every of the introductory books or articles on hyperbolic spaces.
For more details on $\real$-trees see for example~\cite{C-Arbres, H-TreesUltrametrics, H-TreesUltrametrics2}.

\section{The Assouad dimension}\label{Topology}\label{sec_Topology}

In this section we introduce the Assouad dimension, which is the main dimension concept in this article.
Furthermore, we compare it with related concepts.
For a more detailed introduction to the Assouad dimension we refer to~\cite{Assouad} and in particular to~\cite[Appendix~A]{Luukainen}.

Let $X$ be a metric space.
For $\alpha,\beta>0$ let $S(\alpha,\beta)$ be the maximal cardinality of a subset $V$ of $X$ such that each two distinct elements of~$V$ have distance at least $\alpha$ and at most $\beta$.
Let $n$ be the infimum of all $s\geq 0$ such that there is a $C\ge 0$ with $S(\alpha,\beta)\leq C(\frac{\beta}{\alpha})^s$ for all $0<\alpha\leq\beta$.
Then $n$ is called the {\em Assouad dimension} of the metric space $X$ (notation: $\dim_A(X)=n$).

A metric space $X$ is {\em doubling} if there exists a $\kappa\geq 1$ such that every ball of radius $r$ can be covered by at most $2^\kappa$ balls of radius at most $\frac{r}{2}$.
With $\dim_2(X)$ we denote the infimum of all these $\kappa$.
A subset $Y$ of $X$ has {\em diameter} $\sup\set{d(x,y)\mid x,y\in Y}$ (notation: $\diam(Y)$), and a set $\YF\sub\PF(X)$ has {\em diameter} $\diam(\YF)=\sup\set{\diam(Y)\mid Y\in\YF}$.
The {\em radius} of a subset $Y$ of~$X$ is $\rad(Y):=\inf\{\sup\{d(x,y)\mid x\in Y\}\mid Y\in Y\}$ and the {\em radius} of a set $\YF\sub\PF(X)$ is $\rad(\YF):=\sup\{\rad(Y)\mid Y\in\YF\}$.
For every $r\geq 0$, a family $\BF=(B_i)_{i\in I}$ of subsets of~$X$ has {\em $r$-multiplicity} at most $n$ if every subset of~$X$ with diameter at most $r$ intersects with at most $n$ members of the family.
A point $x\in X$ has {\em $r$-multiplicity} at most $n$ in~$\BF$ if $\bar{B}_r(x)$ intersects with at most $n$ members of the family $\BF$ non-trivially.

Our main assumption is that the Assouad dimension of the hyperbolic boundary of our hyperbolic space is finite.
It is easier to use the doubling property instead.
The following theorem guarantees that we treat the same spaces.

\begin{thm}\label{AssouadDoubling}{\em \cite[Theorem A.3]{Luukainen}}
Let $X$ be a metric space.
Then, $X$ is doubling if and only if it has finite Assouad dimension.\qed
\end{thm}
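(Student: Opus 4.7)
I would establish the two implications separately, each by an elementary counting argument relating coverings to separated sets.

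For the direction from doubling to finite Assouad dimension, the first step is to iterate the doubling property: a straightforward induction shows that every ball of radius $r$ can be covered by at most $2^{k\kappa}$ balls of radius $r/2^k$, for every $k\in\nat$. Now fix $0<\alpha\leq\beta$ and let $V\sub X$ be a subset with pairwise distances in $[\alpha,\beta]$. Picking any $v_0\in V$ gives $V\sub\bar{B}_\beta(v_0)$, so I would cover this ball by $2^{k\kappa}$ balls of radius $\beta/2^k$, choosing $k$ minimal with $\beta/2^{k-1}<\alpha$. Each small ball then has diameter below $\alpha$ and therefore contains at most one element of $V$, whence $|V|\leq 2^{k\kappa}$. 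The choice of $k$ bounds $2^k$ by a constant multiple of $\beta/\alpha$, so $S(\alpha,\beta)\leq C(\beta/\alpha)^\kappa$ for some $C=C(\kappa)$, giving $\dim_A(X)\leq\kappa<\infty$.

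For the reverse direction, assume $\dim_A(X)=n<\infty$ and fix some $s>n$ together with a constant $C\geq 0$ satisfying $S(\alpha,\beta)\leq C(\beta/\alpha)^s$ for all $0<\alpha\leq\beta$. Given any ball $\bar{B}_r(x)\sub X$, I would select a maximal subset $V\sub\bar{B}_r(x)$ whose pairwise distances are at least $r/2$. Maximality forces every point of $\bar{B}_r(x)$ to lie within distance $r/2$ of some element of $V$, so the balls of radius $r/2$ centered at the elements of $V$ cover $\bar{B}_r(x)$. Since any two elements of $V$ have distance at most $2r$, the bound $|V|\leq S(r/2,2r)\leq C\cdot 4^s$ is independent of $r$ and $x$; choosing $\kappa$ with $2^\kappa\geq C\cdot 4^s$ then witnesses that $X$ is doubling.

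The work in both directions is essentially bookkeeping of constants, and I do not expect any real conceptual obstacle. The only slightly delicate point is the choice of $k$ in the first implication: it must be large enough that $\beta/2^{k-1}$ falls below $\alpha$, but small enough that $2^k$ stays of order $\beta/\alpha$, so that $2^{k\kappa}$ remains polynomial in $\beta/\alpha$ with exponent $\kappa$. The auxiliary iterated covering estimate (every ball of radius $r$ admits a cover by $2^{k\kappa}$ balls of radius $r/2^k$) is immediate by induction on $k$ from the definition of doubling.
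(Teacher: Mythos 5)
Your proof is correct. Note that the paper does not actually prove this theorem: it is quoted from Luukainen (Theorem A.3) and stated with a \qed, so there is no in-paper argument to compare against. What you supply is the standard elementary proof, and both directions check out against the paper's definitions. For doubling $\Rightarrow$ finite Assouad dimension, the iterated covering bound $2^{k\kappa}$ for radius $r/2^k$ is immediate by induction, each small ball of diameter below $\alpha$ meets $V$ in at most one point, and your minimal choice of $k$ gives $2^k\leq 4\beta/\alpha$, hence $S(\alpha,\beta)\leq 4^{\kappa}(\beta/\alpha)^{\kappa}$ and $\dim_A(X)\leq\kappa$. For the converse, a maximal $(r/2)$-separated subset of $\bar{B}_r(x)$ has pairwise distances in $[r/2,2r]$, so its cardinality is at most $S(r/2,2r)\leq C\cdot 4^s$ uniformly in $r$ and $x$, and maximality makes the $(r/2)$-balls around it a cover; one should just remark that such a maximal separated set exists (Zorn, or a greedy selection, noting that the cardinality bound keeps it finite). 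Your proof is a perfectly good self-contained replacement for the citation.
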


It is easy to adapt the proof of \cite[Lemma 2.3]{LS-Nagata} for Lemma~\ref{LS2.3}, see~\cite[Lemma~3.2]{HyperbolicSpanningTrees} for details.

\begin{lem}\label{LS2.3}
Let $X$ be a doubling metric space, let $N=2^{\dim_2(X)}$, and let $r>0$. Then $X$ has a covering $\BF$ of closed balls of radius $r$ such that $\BF$ is the disjoint union of at most $N^2$ subsets $\BF_i$ of $\BF$ each of which has $r$-multiplicity at most $1$; so $\BF$ has $r$-multiplicity at most $N^2$.

Furthermore, it is possible to choose a given subset $Y$ of $X$ with $d(x,y)>r$ for all $x,y\in Y$ so that $Y$ is a subset of the set of centers of the balls in $\BF$ and such that each two centers of these balls have distance at least $r$ and each center has $3r$-multiplicity at most $N^2$ in~$\BF$.
\qed
\end{lem}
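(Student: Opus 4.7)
My plan is to take $\BF$ to consist of the closed $r$-balls around a maximal $r$-separated subset of $X$ containing $Y$, and then to partition $\BF$ by greedy-colouring its centers.

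First, I would use Zorn's lemma to extend $Y$ to a set $C\sub X$ that is maximal with respect to the property that any two distinct points of $C$ are at distance at least $r$; this is consistent with $Y$ since the pairwise distances in $Y$ already exceed $r$. Setting $\BF:=\set{\bar B_r(c)\mid c\in C}$, maximality of $C$ forces $\BF$ to cover $X$, because any uncovered point would have distance larger than $r$ from all of $C$ and could therefore be added to~$C$. This delivers the two requirements that $Y\sub C$ and that the centers have pairwise distance at least $r$.

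The technical heart is a packing estimate obtained by iterating the doubling inequality a bounded number of times: for every $c\in C$, the ball $\bar B_{4r}(c)$ contains at most $N^2$ points of~$C$. Concretely, one covers $\bar B_{4r}(c)$ by $N^2$ balls of radius so small that two points at pairwise distance at least $r$ cannot lie in a common piece. This immediately yields the second conclusion of the lemma: any $\bar B_r(c')\in\BF$ that meets $\bar B_{3r}(c)$ satisfies $c'\in\bar B_{4r}(c)$, whence each center has $3r$-multiplicity at most~$N^2$ in~$\BF$.

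To partition $\BF$ itself, I would form the graph $G$ on $C$ with an edge between $c$ and $c'$ whenever $d(c,c')\le 3r$. Two balls of $\BF$ lie in a common subfamily of $r$-multiplicity $1$ precisely when no set of diameter at most $r$ meets both, which is guaranteed once the two centers are at distance more than $3r$; hence a proper $N^2$-colouring of $G$ produces a partition $\BF=\bigcup_{i=1}^{N^2}\BF_i$ as required. Since the neighbours of $c$ in $G$ all lie in $\bar B_{3r}(c)\sub\bar B_{4r}(c)$, the packing estimate bounds $\deg_G(c)$ by $N^2-1$, so a greedy algorithm colours $G$ with $N^2$ colours. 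The main obstacle is the sharp constant $N^2$ in the packing estimate: a careless iteration of the doubling inequality only gives a worse polynomial bound in~$N$, and matching the value $N^2$ requires the refined counting argument carried out in the proof of \cite[Lemma~2.3]{LS-Nagata}, which adapts to the present formulation with only cosmetic changes.
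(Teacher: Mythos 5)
The paper offers no proof of this lemma---it is stated with a pointer to the proof of \cite[Lemma~2.3]{LS-Nagata} and its adaptation in \cite[Lemma~3.2]{HyperbolicSpanningTrees}---and your outline reproduces exactly the argument of that cited source: extend $Y$ to a maximal $(>r)$-separated set $C$, observe that maximality forces the closed $r$-balls around $C$ to cover $X$, bound the number of points of $C$ in a ball of fixed multiple of $r$ by iterating the doubling inequality, and then properly colour the graph on $C$ with adjacency $d(c,c')\le 3r$ to split $\BF$ into subfamilies of $r$-multiplicity~$1$. All of the structural steps are correct (two small points you should make explicit: the greedy colouring needs a well-ordering of $C$, or De Bruijn--Erd\H{o}s, since $C$ may be infinite; and the separation of the centers must be kept \emph{strict}, $d(c,c')>r$, because ``$r$-multiplicity'' is defined via sets of diameter \emph{at most} $r$, and a ball of radius $r/2$ has diameter $\le r$).

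The one step you do not close is precisely the one you flag: the packing estimate $|\bar{B}_{4r}(c)\cap C|\le N^2$. With the definitions in this paper ($N=2^{\dim_2(X)}$, one halving of the radius per factor of $N$), you need the covering pieces to have diameter less than the separation $r$, i.e.\ radius at most $r/2$; reaching radius $r/2$ from $4r$ takes three halvings, so iterated doubling yields $|\bar{B}_{4r}(c)\cap C|\le N^3$ (and likewise $N^3$ for $|\bar{B}_{3r}(c)\cap C|$), not $N^2$. Deferring to a ``refined counting argument'' in \cite{LS-Nagata} is not a proof of the constant $N^2$: that lemma is formulated for a covering hypothesis at a single scale, and its adaptation to the doubling setting produces whatever power of $N$ the iteration gives. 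So either supply an argument for $N^2$ or accept the exponent $3$; the latter is harmless for everything the lemma is used for in Section~\ref{sec_Construction} (only the shape of the constants such as $N^{\log_2(8N^2)}$ changes), but as written your proposal asserts a sharper constant than it establishes.
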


Let us briefly compare the Assouad dimension with another dimension concept.
A metric space $X$ has {\em asymptotic dimension} $n$ (notation: $\asdim (X)=n$) if $n$ is the smallest natural number such that for every $\varrho>0$ there exists an open cover $\UF$ of~$X$ such that every $x\in X$ lies in at most $n+1$ elements of~$\UF$, such that $\sup_{U\in\UF}\diam(U)<\infty$, and such that
$$\inf_{x\in X}\sup_{U\in\UF}d(x,X\sm U)\geq \varrho.$$

In the main theorems (Theorem~\ref{thm_mainPart1} and Theorem~\ref{thm_mainPart2}) we are talking about proper hyperbolic geodetic spaces whose hyperbolic boundary has finite Assoud dimension.
Since the hyperbolic boundary is a doubling space, we conclude from~\cite[Corollary~10.2.4]{BS-Elements} that the hyperbolic space itself has finite asymptotic dimension.
We refer to~\cite{BS-Elements} for a broader overview of the distinct dimension concepts for hyperbolic spaces and to~\cite{BellDrani-Asymptotic, Gromov-Asymptotic} for more about the asymptotic dimension.

\section{Construction of the $\real$-tree}\label{Construction}\label{sec_Construction}

In this section we construct a rooted $\real$-tree $T$ inside a geodetic proper hyperbolic space $X$ whose hyperbolic boundary has finite Assouad dimension and whose hyperbolic constant is not~$0$.

The idea of the proof is similar to the proof of the main result in~\cite{HyperbolicSpanningTrees} for locally finite hyperbolic graphs.
But because the construction differs from the one in~\cite{HyperbolicSpanningTrees} and we are dealing with proper hyperbolic geodetic spaces instead of locally finite hyperbolic graphs, we prove the properties at the end of this section.

Let $d_h=d_\varepsilon$ be a metric such that $\varepsilon$ satisfies the assumptions as in Theorem~\ref{BoundComp} and hence such that $(\widehat{X},d_h)$ is a compact metric space.
By~\cite[Sections 6 and 9]{BonkSchramm_Embeddings} the property of $X$ to have finite Assouad dimension does not depend on the particular choice of~$\varepsilon$.
That means if $\rand X$ has finite Assouad dimension for one metric $d_\varepsilon$, then this holds for all these metrics.
That is the reason why we are able just to say that $\rand X$ has finite Assouad dimension.
By Theorem~\ref{AssouadDoubling}, $\rand X$ is a doubling metric space.
So let $N=2^{\dim_2(\rand X)}$.

The rooted $\real$-tree $T$ that we shall construct will have the following properties.
\begin{enumerate}[(1)]
\item Every ray in~$T$ converges to a point in the hyperbolic boundary of~$X$;
\item for every boundary point $\eta$ of~$X$ there is a ray in~$T$ converging to~$\eta$;
\item for every boundary point $\eta$ of~$X$ there are at most $N^{2+\log_2(8N^2)}$ distinct rays in~$T$ that start at the root of~$T$ and converge to~$\eta$.
\end{enumerate}

\medskip

We construct the rooted $\real$-tree $T$ recursively, so let $r\in X$ be the base-point of the Gromov-product which we used for the definition of the metric $d_\varepsilon$.
Then $r$ will be the root of~$T$.
For the construction of $T$ we construct a strictly descending sequence $\seq{\varepsilon}{j}{\nat}$ in $\real_{>0}$, two sequences $\seq{S}{j}{\nat},\seq{Y}{j}{\nat}$ of subsets of~$\rand X$, a sequence $\seq{\UF}{j}{\nat}$ of open covers of~$\rand X$, a sequence $\seq{\BF}{j}{\nat}$ of closed covers of~$\rand X$, and a sequence $\seq{T}{j}{\nat}$ of $\real$-trees that lie in $X$.
Our final tree $T$ will be the union of all the $T_j$.
The other sequences will help us in the construction of the $\real$-trees $T_j$ and they will satisfy the assertions (a) to~(k) for every~$j$.
\begin{enumerate}[(a)]
\item\label{item_epsilon} $\varepsilon_j=\frac{a}{8N^2}$ with $a=\frac{\varepsilon_{j-1}}{4\cdot16}$, so $\varepsilon_j=\frac{\varepsilon_{j-1}}{512N^2}$;
\item\label{item_SYS} there is $S_{j-1}\subseteq Y_j\subseteq S_j$;
\item\label{item_SDistance} $d_h(\eta,\mu)\ge \varepsilon_j$ for all $\eta\neq\mu\in S_j$;
\item\label{item_SMultiplicity} the set $S_j$ has $\frac{\varepsilon_{j-1}}{16}$-multiplicity at most $N^{\log_2(8N)}$;
\item\label{item_YDistance} $d_h(\eta,\mu)\ge \frac{\varepsilon_{j-1}}{4\cdot 16}$ for all $\eta\neq\mu\in Y_j$;
\item\label{item_UOpenBallAroundS} The open cover $\UF_j$ consists of precisely the open $\varepsilon_j$-balls around the elements of~$S_j$;
\item\label{item_B} the set $\BF_j$ consists of all closed balls of radius $\frac{\varepsilon_{j-1}}{4\cdot16}$ around the elements of~$Y_j$ and it has $\frac{\varepsilon_{j-1}}{4\cdot16}$-multiplicity at most $N^2$;
\item\label{item_YMulti} every $\eta\in Y_j$ has $(3\cdot\frac{\varepsilon_{j-1}}{4\cdot16})$-multiplicity at most $N^2$ in $\BF_j$;
\item\label{item_TT} $T_{j-1}\subseteq T_j$;
\item\label{item_RaysInTToS} every ray in $T_j$ converges to an elements of~$S_j$ and to each element converges precisely one ray that starts at the root;
\item\label{item_RaysInTGeod} every ray in $T_j$ is eventually geodetic, in particular, there is a constant $c$ depending only on $\varepsilon_j$ such that every ray in $T_j\sm \bar{B}_c(x)$ is a geodetic ray.
\end{enumerate}

Before we start the recursion step, we first define the elements of all sequences for $j=0$. 
Let $\mu^0\in\rand X$, $S_0=Y_0=\{\mu^0\}$ and $\varepsilon_0=\sup\{d_h(\mu^0,\eta)\mid \eta\in\rand X\}$---recall that $\rand X$ is bounded by Theorem~\ref{BoundComp}.
Let $\BF_0=\UF_0=\rand X$ and let $T_0$ be a geodetic ray from $r$ to~$\mu^0$ which exists by Proposition~\ref{prop_geodExists}.
Then all properties are satisfied for $j=0$.

For the recursion step we may choose the $\epsilon_j$ so that (\ref{item_epsilon}) holds.
Lemma~\ref{LS2.3} shows that there is a closed covering $\BF_j$ of $\rand X$ with balls of radius $\frac{\varepsilon_{j-1}}{4\cdot16}$ such that this covering has $\frac{\varepsilon_{j-1}}{4\cdot16}$-multiplicity at most $N^2$ and such that the set $Y_j$ of centers of these balls contains $S_{j-1}$ and such that every $\eta\in Y_j$ has $(3\cdot\frac{\varepsilon_{j-1}}{4\cdot 16})$-multiplicity at most $N^2$ in $\BF_j$.
Then (\ref{item_YDistance}), (\ref{item_B}), (\ref{item_YMulti}), and the first part of (\ref{item_SYS}) hold.

Let $S_j$ be a subset of $\rand X$ with $Y_j\subseteq S_j$ such that $d_h(\mu,\nu)\geq \varepsilon_j$ for all $\mu,\nu\in S_j$, such that $S_j$ has $\frac{\varepsilon_{j-1}}{16}$-multiplicity at most $N^{\log_2(8N)}$, and such that $\UF_j:=\{B_{\varepsilon_j}(\mu)\mid \mu\in S_j\}$ is an open cover of $\rand X$.
This set $S_j$ exists by applying the doubling definition $\log_N(N^{\log_2(8N)})=\log_2(8N)$ times to the sets in $\BF_j$ and it is finite because $\rand X$ is doubling.
As a consequence we have (\ref{item_SDistance}), (\ref{item_SMultiplicity}), (\ref{item_UOpenBallAroundS}), and the remaining part of (\ref{item_SYS}).
The only element of any of the sequences that remains to be constructed is the $\real$-tree $T_j$.

We construct the $\real$-tree $T_j$ by recursion.
Let $T^{0,1}_j=T_{j-1}$.
We enumerate the set $S_j\sm S_{j-1}$ in the following way.
Let $\mu_1^1,\mu_2^1,\ldots$ be the elements with $8\varepsilon_{{j-1}}$-multiplicity $1$ in $\BF_{{j-1}}$, let $\mu_1^2,\mu_2^2,\ldots$ be the elements with $(2\cdot8\varepsilon_{{j-1}})$-multiplicity at most $2$ in $\BF_{{j-1}}$, and so on.
As the set $\BF_{j-1}$ has $\frac{\varepsilon_{j-2}}{4\cdot 16}$-multiplicity at most $N^2$ and $8N^2\varepsilon_{j-1}=\frac{\varepsilon_{j-2}}{4\cdot16}\leq\rad(\BF_{j-1})$, there are no $\mu^i_k$ with $i>N^2$ by (\ref{item_B}).

The $\real$-tree $T^{i,k}_j$ shall be the union of the $\real$-tree $T^{i-1,k}_j$ and an eventually geodetic ray from $T^{i-1,k}_j$ to the hyperbolic boundary point $\mu^i_k$, where we denote with $T^{0,k}_j$ the union of all $T^{a,k-1}_j$.
So let $\mu^i_k\in S_j\sm S_{j-1}$ and assume that we have already constructed the $\real$-tree $T^{i-1,k}_j$.
There is a $\mu\in S_{j-1}$ with $d_h(\mu^i_k,\mu)\le \varepsilon_{j-1}$.
Let $R$ be a geodetic double ray from $\mu^i_k$ to~$\mu$.
Let $Q$ denote the largest distance from $r$ to any geodetic double ray between two boundary points of distance at most $\varepsilon_{j-1}$ and at least $\varepsilon_j$ and let $q$ denote the smallest distance of from $r$ to such a double ray.
Then we know that there is $\beta=Q-q$ for the constant $\beta$ from Proposition~\ref{prop_QuotientLeadsToDistance}.

Let us first consider the case that there is a common point of~$R$ and $T^{i-1,k}_j$ that has distance at most $Q+5\delta$ to~$r$.
Then there is a first common point $x$ of $R$ and $T^{i-1,k}_j$ such that $Rx$, the ray from $x$ to~$\mu^i_k$, contains no other point of~$T_j^{i-1,k}$ by the compactness of the ball of radius $Q+5\delta$.
In this case we just add the subray $Rx$ to the boundary point $\mu^i_k$ to the $\real$-tree to obtain the $\real$-tree $T^{i,k}_j$.
By the choice of~$x$, $T^{i,k}_j$ is indeed an $\real$-tree.
In preparation of the proof of Lemma~\ref{lem_TreeIsQuasiGeo} we denote with $\pi_R$ the point $x$ and for Claim~\ref{clm_1stClaim} we set $x_P:=x$.
If $x$ lies during the construction on a geodetic double ray $P$, then we say that we have {\em connected} $\mu^i_k$ to that limit point $\eta$ of~$P$ that has smaller distance to~$\mu^i_k$.
If $x$ lies on some $\pi_P$ for a double ray $P$, then we have {\em connected} $\mu^i_k$ either to the boundary point $\eta$ we constructed a new ray to with $P$ or inductively to one of the possible boundary points we connected $\eta$ to, depending which one has the smallest distance to $\mu^i_k$.
Since the hyperbolic boundary has the doubling property, the described relation is well-defined.
If the hyperbolic boundary point $\eta$ to which $\mu^i_k$ is connected lies in $S_{j-1}$, then $\mu^i_k$ is {\em eventually connected} to~$\eta$.
If this is not the case, then $\mu^i_k$ is {\em eventually connected} to that hyperbolic boundary point to which $\eta$ is eventually connected.

Now we look at the case that $R$ is totally distinct from $T^{i-1,k}_j$ in the ball with center $r$ and radius $Q+5\delta$.
There is a geodetic ray $P$ in $T^{i-1,k}_j$ converging to~$\mu$ whose first point has distance $Q+5\delta$ to~$r$.
Let $x_P$ be the starting point of~$P$.
Then $d(o,x_P)\leq Q+5\delta$.
We consider a geodetic path $\tilde{\pi}_R$ from $R$ to~$P$ that has length at most~$\delta$ with the additional property that for a point $z$ on~$R\cap B_Q(r)$ we have that $d_{R\cup \pi_R}(z,x_P)$ is minimal.
This exists because every point $y$ on~$P$ with $d(r,y)=Q+3\delta$ is $\delta$-close to a point on~$R$ and because $X$ is proper.
As it lies in the ball with center $r$ and radius $Q+6\delta$, there is, by the compactness of that ball, a smallest connected subpath $\pi_R$ of~$\tilde{\pi}_R$ that contains a point of~$R$ and a point of~$T^{i-1,k}_j$.
Let $x_R$ denote the intersection point of $\pi_R$ and $R$.
Then we add the subray of~$R$ from $\pi_R$ to $\mu^i_k$ together with $\pi_R$ to $T^{i-1,k}_j$ to obtain a new $\real$-tree $T^{i,k}_j$.
The property for $\mu^i_k$ of being {\em connected} is defined analog to the first case.

Let $T_j:=\bigcup_{i,k} T^{i,k}_j$.
We shall show that $T_j$ is a $\real$-tree again.
But this is an easy observation because $T_j$ is the union of a chain of finitely many $\real$-trees.
We remark that the $\real$-tree $T_j$ satisfies the properties (\ref{item_TT}), (\ref{item_RaysInTToS}), and (\ref{item_RaysInTGeod}) for $c=Q$.

\medskip

We just have defined all sequences as claimed.
Set$$T:=\bigcup_{j\in\nat}T_j.$$
It remains to prove that $T$ is a $\real$-tree and satisfies the assertions (1) to (3) as claimed.
As each of the $\real$-trees $T_j$ is connected and $T_j\sub T_{j+1}$, we know that $T$ is connected.
So we just have to show that $T$ does not contain any circle.

We construct an auxiliary graph: For every $j\in\nat$ let $\TF_j$ be the graph whose vertex set consists of~$r$, of all ends of~$T_j$, and of all points $x$ of~$T_j$ for which there are three non-trivial and pairwise disjoint (except for~$x$) paths.
Two vertices are adjacent if and only if there is a geodesic in $T_j$ between these two points that contains no other vertex of~$\TF_j$.
There is a canonical injective homomorphism\footnote{A {\em homomorphism} from a graph $G$ to a graph $H$ is a map from the vertex set of~$G$ to the vertex set of~$H$ such that the images of each two adjacent vertices are adjacent in~$H$.} $\gamma_j$ from $\TF_j$ to~$\TF_{j+1}$ that is the identity on $V\TF_j\cap T_j$ but that may map ends of~$T_j$ to points of~$T_{j+1}$.
By the construction of~$T_j$, the graphs $\TF_j$ are finite trees.
Let $\TF:=\bigcup_{j\in\nat}\TF_j$.
We constructed the trees $T_j$ so that for every $\varrho>0$ there is a $j_\varrho$ such that for every $j\geq j_\varrho$ we have $T_j\cap \bar{B}_\varrho(r)=T_{j_\varrho}\cap \bar{B}_\varrho(r)$.
Hence for every vertex $x$ of~$\TF$ there is an index $j_x$ such that the degree of~$x$ in~$\TF$ is the same as the one in~$\TF_{j_x}$.
We first show that $\TF$ is a tree.
If this is not the case, then there is a {\em cycle} $C$ in~$\TF$ that is a finite sequence of vertices such that any element of that sequence is adjacent to its successor and the last and the first vertex are also adjacent.
Since $C$ has only finitely many edges, there is a $j\in\nat$ such that $C$ is contained in~$\TF_j$ which is impossible because all the $\TF_j$ are trees.
Next we observe that---if we consider the graphs to be $1$-complexes---we can define homeomorphisms $\tau_j$ from $\widehat{T}_j$ to~$\TF_j$ that are the identity on $V_j:=V\TF_j\cap T_j$ and with $\tau_j|_{\TF_j[V_j]}=\tau_{j+1}|_{\TF_j[V_j]}$\footnote{For a graph $G$ and a subset $S$ of its vertices, $G[S]$ denotes the graph with vertex set $S$ and an edge between $x,y\in S$ if and only if it is an edge in~$G$.}.
These homeomorphisms converge to a homeomorphism $\tau:\widehat{T}\to\widehat{\TF}$.
But then for any circle in~$T$ its image in~$\TF$ is contained in a cycle in~$\TF$.
As there are no cycles in~$\TF$, we also have no circles in~$T$, so $T$ is an $\real$-tree.

\bigskip

In order to prove the assertions (1) to (3) we shall prove several claims in which we use the notation from the construction step $(j,k,i)$.

\begin{clm}\label{clm_1stClaim}
There is $d_{T^{i,k}_j}(R,x_P)\leq \delta$.
\end{clm}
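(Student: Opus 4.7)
I would split along the dichotomy in the construction of $T^{i,k}_j$. In the first case, where $x$ (and hence $x_P:=x$) already lies on both $R$ and $T^{i-1,k}_j$, the claim is immediate: $x_P\in R$ gives $d_{T^{i,k}_j}(R,x_P)=0$.

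In the second case, where $R$ is disjoint from $T^{i-1,k}_j$ inside $\bar{B}_{Q+5\delta}(r)$, the strategy is to show that the endpoint of $\pi_R$ on $T^{i-1,k}_j$ is precisely $x_P$. Granting this, the unique arc in $T^{i,k}_j$ joining $x_R\in R$ to $x_P$ is $\pi_R$ itself, whose length is bounded by that of $\tilde{\pi}_R$, which is at most $\delta$, and the claim follows.

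To identify the endpoint of $\pi_R$ as $x_P$, I plan to invoke the minimality clause in the definition of $\tilde{\pi}_R$: among geodetic paths from $R$ to $P$ of length at most $\delta$, $\tilde{\pi}_R$ is selected so that $d_{R\cup\tilde{\pi}_R}(z,x_P)$ is minimal for the fixed reference point $z\in R\cap B_Q(r)$. Finiteness of this distance forces $x_P\in R\cup\tilde{\pi}_R$, and since $x_P\notin R$ in this case, $x_P$ must equal the $P$-endpoint of $\tilde{\pi}_R$. Because $R$ is disjoint from $T^{i-1,k}_j$ inside $\bar{B}_{Q+5\delta}(r)$ while $\tilde{\pi}_R$ lies in $\bar{B}_{Q+6\delta}(r)$, the first point of $T^{i-1,k}_j$ met along $\tilde{\pi}_R$ when starting from its $R$-endpoint is $x_P$ itself, so the smallest subpath $\pi_R$ constructed from $\tilde{\pi}_R$ terminates at $x_P$.

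The main obstacle I anticipate is precisely this bookkeeping: the notational overlap between $\pi_R$ and $\tilde{\pi}_R$ in the construction and the need to rule out that $\tilde{\pi}_R$ touches $T^{i-1,k}_j$ at some point other than $x_P$ (for example via a nearby segment of $P$ emanating from $x_P$ and still inside $\bar{B}_{Q+6\delta}(r)$). Once the endpoint is correctly identified, the remaining step is only the trivial length estimate $\mathrm{length}(\pi_R)\le \mathrm{length}(\tilde{\pi}_R)\le\delta$, requiring no further hyperbolic geometry.
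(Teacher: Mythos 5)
Your Case 1 is fine and matches the trivial case of the paper's argument. The gap is in Case 2: you assert that the endpoint of $\pi_R$ on $T^{i-1,k}_j$ is $x_P$, but this is exactly what can fail, and the situation where it fails is the entire content of the paper's proof. The path $\tilde{\pi}_R$ is a geodetic path from $R$ to $P$; before it reaches $P$ it may cross a previously added double ray $R'$ or connecting segment $\pi_{R'}$ lying in $\bar{B}_{Q+6\delta}(r)$, in which case the ``smallest connected subpath containing a point of $R$ and a point of $T^{i-1,k}_j$'' terminates on that earlier object, not on $P$ and not at $x_P$. Your attempt to exclude this via the minimality clause does not work: the condition that $d_{R\cup\pi_R}(z,x_P)$ be minimal is a minimization over the admissible choices of the connecting path, not a statement that this quantity is finite for the chosen one, so it does not force $x_P\in R\cup\tilde{\pi}_R$. (Even setting that aside, the $P$-endpoint of $\tilde{\pi}_R$ is merely some point of $P$ that is $\delta$-close to $R$, not $x_P$ itself, so identifying the terminal point of $\pi_R$ with $x_P$ is unjustified on both counts.)

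What is actually needed, and what the paper supplies, is an argument for the bad case: assume $\pi_R$ meets some earlier $R'$ or $\pi_{R'}$, and show, by induction on the previously constructed trees together with a hyperbolicity and separation argument, that this earlier object is itself within tree-distance $\delta$ of $x_P$. Concretely: if the meeting point were farther than $\delta$ from $x_P$ in the tree, then the corresponding point $x_{P'}$ would lie within $2\delta$ of $x_P$, any geodetic double ray between the boundary points $\xi,\xi'$ to which $P$ and $P'$ converge would lie in a $3\delta$-neighborhood of $P\cup\pi_R\cup\pi_{R'}\cup P'$ and hence at distance at least $Q+\delta$ from $r$, forcing $d_h(\xi,\xi')<\varepsilon_j$ and contradicting assertion (c) that distinct elements of $S_j$ are $\varepsilon_j$-separated; the case where $\pi_R$ meets an earlier $R'$ is then reduced to this one via the minimality in the choice of $\pi_{R'}$. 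None of this appears in your proposal, so the claim is not established.
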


\begin{proof}[Proof of Claim~\ref{clm_1stClaim}]
By induction, we know that the corresponding statement holds for all previous $\real$-trees.
If $\pi_R$ does not meet $T^{i-1,k}_j$ except for $x_P$, then the assertion holds trivially, so we may assume that $\pi_R$ meets some other $R'$ or $\pi_{R'}$ (these correspond to~$R$ or $\pi_R$ in a previous step).
Suppose first, that it meets some $\pi_{R'}$.
Then this $\pi_{R'}$ has to have distance at most $\delta$ to~$x_P$, because otherwise the corresponding point $x_{P'}$ lies at distance at most $2\delta$ from~$x_P$ and thus for the two hyperbolic boundary points to which $P$ and $P'$ converge, $\xi$ and $\xi'$, respectively, any geodetic double ray between them lies in a $3\delta$-neighborhood of $P\cup \pi_R\cup\pi_{R'}\cup P'$, so at least $Q+\delta$ away from~$r$ and hence we have $d_h(\xi,\xi')<\varepsilon_{j}$ which is impossible as soon as $\xi\neq\xi'$.
Let us now suppose that $\pi_R$ meets some $R'$.
Then we have chosen $\pi_{R'}$ so that every point on it has distance at most $\delta$ to~$x_P$ in the $\real$-tree by the same contradiction as above.
By the minimality of~$d_{R'\cup\pi_{R'}}(z',x_P)$ for the point $z'$ that corresponds to~$z$ for $R'$ instead of~$R$, we know that the claim must hold.
\end{proof}

\begin{clm}\label{clm_2ndClaim}
Let $\mu^n_{k}$ and $\mu^n_{l}$ be two elements of $S_j\setminus S_{j-1}$ with $d_h(\mu^n_{k},\mu^n_{l})\leq 8\varepsilon_{j-1}$ for an $n\leq N^2$.
Then for any $B\in\BF_{j-1}$ with $d_h(\mu^n_{k},B)\leq n8\varepsilon_{j-1}$ there is $d_h(\mu^n_{l},B)\leq n8\varepsilon_{j-1}$.
\end{clm}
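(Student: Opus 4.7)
The plan is to exploit the level structure in the enumeration of $S_j\sm S_{j-1}$ defined just above the claim. Write $\NF(\mu,r):=\{B\in\BF_{j-1}:d_h(\mu,B)\le r\}$; then the claim is equivalent to the inclusion $\NF(\mu^n_k,n\cdot 8\varepsilon_{j-1})\sub\NF(\mu^n_l,n\cdot 8\varepsilon_{j-1})$. Note that the naive triangle inequality only gives $\NF(\mu^n_k,n\cdot 8\varepsilon_{j-1})\sub\NF(\mu^n_l,(n+1)\cdot 8\varepsilon_{j-1})$, with one extra multiple of $8\varepsilon_{j-1}$, so the enumeration itself must be brought in to close the gap. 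Two pieces of information about $\mu^n_l$ are available from its placement at level $n$: first (an upper bound) its $(n\cdot 8\varepsilon_{j-1})$-multiplicity is at most $n$, so $|\NF(\mu^n_l,n\cdot 8\varepsilon_{j-1})|\le n$; and second (a lower bound) $\mu^n_l$ was not placed at a lower level, so for $n\ge 2$ its $((n-1)\cdot 8\varepsilon_{j-1})$-multiplicity strictly exceeds $n-1$, giving $|\NF(\mu^n_l,(n-1)\cdot 8\varepsilon_{j-1})|\ge n$; for $n=1$ the analogous bound $|\NF(\mu^1_l,0)|\ge 1$ is immediate from $\BF_{j-1}$ being a cover of $\rand X$.

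Next, combining this lower bound on $\mu^n_l$ with the ordinary triangle inequality, any $B\in\BF_{j-1}$ with $d_h(\mu^n_l,B)\le (n-1)\cdot 8\varepsilon_{j-1}$ satisfies
\[d_h(\mu^n_k,B)\le d_h(\mu^n_k,\mu^n_l)+d_h(\mu^n_l,B)\le 8\varepsilon_{j-1}+(n-1)\cdot 8\varepsilon_{j-1}=n\cdot 8\varepsilon_{j-1},\]
and therefore $\NF(\mu^n_l,(n-1)\cdot 8\varepsilon_{j-1})\sub\NF(\mu^n_k,n\cdot 8\varepsilon_{j-1})$. The left-hand side has at least $n$ elements while the right-hand side has at most $n$, so the two sets coincide. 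This forces
\[\NF(\mu^n_k,n\cdot 8\varepsilon_{j-1})=\NF(\mu^n_l,(n-1)\cdot 8\varepsilon_{j-1})\sub\NF(\mu^n_l,n\cdot 8\varepsilon_{j-1}),\]
which is precisely the claim.

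The only delicate point, and hence the main obstacle, is the bookkeeping of strict versus non-strict multiplicity bounds: the strict inequality implicit in ``$\mu^n_l$ was not placed at an earlier level'' is what saves one unit of $8\varepsilon_{j-1}$ and aligns the cardinalities so as to produce equality rather than a strict inclusion. Everything else is merely the triangle inequality and pigeonhole.
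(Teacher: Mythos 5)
Your argument is correct and is essentially the paper's own proof: both rest on the observation that placement of a point at level $n$ of the enumeration pins its multiplicity in $\BF_{j-1}$ at radius $(n-1)\cdot 8\varepsilon_{j-1}$ to exactly $n$ while bounding it by $n$ at radius $n\cdot 8\varepsilon_{j-1}$, so a pigeonhole collapses the two neighbourhood sets and the triangle inequality with $d_h(\mu^n_k,\mu^n_l)\leq 8\varepsilon_{j-1}$ transfers the conclusion. The only difference is bookkeeping — the paper collapses the radius on $\mu^n_k$ alone and then moves to $\mu^n_l$, whereas you sandwich $\NF(\mu^n_l,(n-1)\cdot 8\varepsilon_{j-1})$ inside $\NF(\mu^n_k,n\cdot 8\varepsilon_{j-1})$ and compare cardinalities — which is not a substantive divergence.
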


\begin{proof}[Proof of Claim~\ref{clm_2ndClaim}]
The $((n-1)8\varepsilon_{j-1})$-multiplicity of~$\mu^n_{k}$ and the one of~$\mu^n_{l}$ in~$\BF_{j-1}$ has to be~$n$.
Thus for every hyperbolic boundary point $\eta$ in~$Y_{j-1}$ with distance at most $n8\varepsilon_{j-1}$ to $\mu^n_{k}$ we have $d_h(\eta,\mu^n_k)\leq (n-1)8\varepsilon_{j-1}$ and hence also $d_h(\eta,\mu^n_l)\leq n8\varepsilon_{j-1}$.
\end{proof}

\begin{clm}\label{clm_3rdClaim}
Let  $\mu^k_{i+1}$ be connected to~$\mu\in S_j$. Then $d_h(\mu,\mu^k_{i+1})\leq 8\varepsilon_{j-1}$.
If $\mu^k_{i+1}$ is eventually connected to $\eta\in S_{j-1}$ in~$T_j$, then$$d_h(\eta,\mu^k_{i+1})\leq 8N^2\varepsilon_{j-1}+\rad(\BF_{j-1})=16N^2\varepsilon_{j-1}.$$
Furthermore, $\eta$ lies in some $B\in\BF_{j-1}$ with $d_h(\mu^k_{i+1},B)\leq 8N^2\varepsilon_{j-1}$.
\end{clm}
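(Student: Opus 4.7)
The plan is to establish the three assertions in sequence. Assertion~(1) is the geometric substance; assertions~(2) and~(3) follow by iterating the connection relation.

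For assertion~(1), recall that by construction $\mu^{i+1}_k$ is connected to $\mu$ via a point $x$ which lies simultaneously on the geodetic double ray $R$ from $\mu^{i+1}_k$ to its $S_{j-1}$-anchor $\mu_0$ (where $d_h(\mu_0,\mu^{i+1}_k)\le\varepsilon_{j-1}$) and on the geodetic double ray $P$ of which $\mu$ is the nearer limit to $\mu^{i+1}_k$; the construction moreover ensures $d(r,x)\le Q+5\delta$ (with case~2 introducing only an additive $\delta$ through the bridging segment $\pi_R$). I would estimate the Gromov product $(\mu,\mu^{i+1}_k)_r$ from below. From $d_h(\mu_0,\mu^{i+1}_k)\le\varepsilon_{j-1}$ and Theorem~\ref{BoundComp}, $(\mu_0,\mu^{i+1}_k)_r\ge\frac{1}{\varepsilon}\log(\varepsilon'/\varepsilon_{j-1})$; combined with Proposition~\ref{LimesOfGromovProd} applied to $R$ and the definition of $q$, this shows $(\mu_0,\mu^{i+1}_k)_r\ge q-4\delta$. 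The $\delta$-hyperbolic four-point condition applied to the triangle $(\mu^{i+1}_k,\mu_0,\mu)$, together with the defining choice of $\mu$ as the nearer of the two limits of $P$ to $\mu^{i+1}_k$, then yields $(\mu,\mu^{i+1}_k)_r\ge q-O(\delta)$, and Theorem~\ref{BoundComp} converts this back to $d_h(\mu,\mu^{i+1}_k)\le 8\varepsilon_{j-1}$; the constant $8$ absorbs $1/\varepsilon'$ and $e^{O(\varepsilon\delta)}$.

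For assertions~(2) and~(3), set $\nu_0:=\mu^{i+1}_k$ and let $\nu_{s+1}$ denote the element of $S_j$ to which $\nu_s$ is connected; by the recursive definition of \emph{eventually connected} the chain terminates at some $\nu_m=\eta\in S_{j-1}$, and by assertion~(1) each step satisfies $d_h(\nu_s,\nu_{s+1})\le 8\varepsilon_{j-1}$. I would bound $m$ as follows: along the portion of the chain in $S_j\sm S_{j-1}$, two consecutive elements at the same level $n$ would, by Claim~\ref{clm_2ndClaim}, share the same set of $n$ balls of $\BF_{j-1}$ in their $(n\cdot 8\varepsilon_{j-1})$-neighbourhood; an inspection of the construction order rules this out, so the level strictly decreases at each step. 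Since there are only $N^2$ levels we obtain $m\le N^2$, and consequently $d_h(\mu^{i+1}_k,\nu_{m-1})\le 8N^2\varepsilon_{j-1}$. As $d_h(\nu_{m-1},\eta)\le 8\varepsilon_{j-1}\le\rad(\BF_{j-1})$ and $\BF_{j-1}$ covers $\rand X$, $\eta$ lies in some $B\in\BF_{j-1}$ that also contains $\nu_{m-1}$; this gives assertion~(3): $d_h(\mu^{i+1}_k,B)\le d_h(\mu^{i+1}_k,\nu_{m-1})\le 8N^2\varepsilon_{j-1}$. Assertion~(2) then follows by the triangle inequality: $d_h(\mu^{i+1}_k,\eta)\le d_h(\mu^{i+1}_k,B)+\rad(\BF_{j-1})=8N^2\varepsilon_{j-1}+8N^2\varepsilon_{j-1}=16N^2\varepsilon_{j-1}$.

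The main obstacle is the lower bound $(\mu,\mu^{i+1}_k)_r\ge q-O(\delta)$ in assertion~(1). The naive thin-triangle argument produces \emph{upper} bounds on Gromov products, corresponding to lower bounds on $d_h$, i.e.\ the wrong direction; one must therefore carefully exploit that $\mu$ is selected as the nearer limit of $P$, together with the location of $x$ near $r$ and the two-sided estimate of Proposition~\ref{LimesOfGromovProd}, to obtain the required inequality. A secondary difficulty is making the chain-length bound in (2)--(3) rigorous: while the intuition that levels strictly decrease is clear, its justification requires a careful combinatorial argument via Claim~\ref{clm_2ndClaim} to rule out same-level repetitions in the chain.
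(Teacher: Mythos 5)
Your sketch of the first assertion is in the right spirit (lower-bound the Gromov product $(\mu,\mu^{i+1}_k)_r$ and convert via Theorem~\ref{BoundComp}), and this matches the paper's strategy; but the step you yourself flag as the main obstacle is not actually closed. The four-point condition applied to $(\mu^{i+1}_k,\mu_0,\mu)$ gives $(\mu,\mu^{i+1}_k)_r\ge\min\{(\mu,\mu_0)_r,(\mu_0,\mu^{i+1}_k)_r\}-\delta$, which is useless until you also bound $(\mu,\mu_0)_r$ from below, and that is essentially the same difficulty in disguise. What actually closes it (and what the paper does) is a case analysis on how $R$ attaches to $T^{i-1,k}_j$ --- nontrivial connector $\pi_R$, or $R$ meeting a previously inserted double ray $R'$, or $R$ meeting a previous connector $\pi_{R'}$ --- showing in each case that a geodetic double ray $[\mu^{i+1}_k,\mu]$ lies in a $\delta$-- resp.\ $2\delta$--neighbourhood of $R\cup R'$ resp.\ $R\cup\pi_{R'}\cup P'$, all of which keep distance at least $q-O(\delta)$ from $r$; this is where the inequality $(\mu^{i+1}_k,\mu)_r\ge(\mu^{i+1}_k,\mu')_r-5\delta$ comes from concretely.

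The genuine gap is in your argument for assertions (2) and (3). You bound the length of the connection chain $\nu_0,\dots,\nu_m$ by $N^2$ by asserting that the level (the superscript in the enumeration of $S_j\sm S_{j-1}$ by multiplicities in $\BF_{j-1}$) strictly decreases at each step, citing Claim~\ref{clm_2ndClaim} plus ``an inspection of the construction order''. This is not true: a new point $\mu^n_k$ may perfectly well first meet the tree on the ray that was inserted for $\mu^n_{k-1}$ and be connected to it, so consecutive same-level elements do occur, the chain length is not bounded by $N^2$, and the triangle-inequality summation $d_h(\nu_0,\nu_{m-1})\le(m-1)\cdot 8\varepsilon_{j-1}$ does not give $8N^2\varepsilon_{j-1}$. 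Claim~\ref{clm_2ndClaim} serves the opposite purpose: it shows that a same-level connection does \emph{not} move you farther from the relevant balls of $\BF_{j-1}$. The paper's induction accordingly tracks the invariant $d_h(\nu_s,B)\le\ell(\nu_s)\cdot 8\varepsilon_{j-1}$, where $B\in\BF_{j-1}$ is a fixed ball containing $\eta$ and $\ell(\nu_s)$ is the level of~$\nu_s$: steps between equal levels preserve the invariant by Claim~\ref{clm_2ndClaim}, steps where the level changes add $8\varepsilon_{j-1}$, which is absorbed by the level increment, and since levels never exceed $N^2$ one gets $d_h(\mu^{i+1}_k,B)\le 8N^2\varepsilon_{j-1}$ independently of how long the chain is. Without this accounting (or some other replacement for the false chain-length bound), your proof of (2) and (3) does not go through.
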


\begin{proof}[Proof of Claim~\ref{clm_3rdClaim}]
Let us first prove $d_h(\mu^k_{i+1},\mu)\leq 8\varepsilon_{j-1}$.
An immediate consequence of Claim~\ref{clm_1stClaim} is, if we inserted a geodetic part $\pi_R$, that then the boundary point we connected $\mu^k_{i+1}$ to has distance at most $\varepsilon_{j-1}$ to~$\mu^k_{i+1}$.

So we assume in the following that $\pi_R$ is only one point.
Then $R$ meets some other double ray $R'$ or a geodetic segment $\pi_{R'}$ where $R'$ and $\pi_{R'}$ are as in the proof of Claim~\ref{clm_1stClaim}.
If $R$ meets some other double ray $R'$, then $\mu$ is the limit point of~$R'$ and any geodetic double ray $[\mu^k_{i+1},\mu]$ lies in a $\delta$-neighborhood of~$R\cup R'$, so it has distance at least $q-\delta$ to~$r$.
Then we have with
$$\delta':=\exp(6\varepsilon\delta)\leq(\sqrt{2})^6=8$$
for any $\mu'\in\rand X$ with $\varepsilon_j\leq d_h(\mu^k_{i+1},\mu')\leq\varepsilon_{j-1}$
$$\begin{array}{lll}
d_h(\mu^i_k,\mu)&\le&\exp(-\varepsilon(\mu^i_k,\mu))\\
&\leq&\exp(-\varepsilon(\mu^i_k,\mu')+5\varepsilon\delta)\\
&\leq&\frac{\delta'}{\varepsilon'}\exp(-\varepsilon(\mu^i_k,\mu'))\\
&\leq&8d_h(\mu^i_k,\mu')\\
&\leq&8\varepsilon_{j-1}.
\end{array}$$
Now we assume the last case, that is, $R$ meets some $\pi_{R'}$.
Then any point on~$R\cap\pi_{R'}$ has distance at most $\delta$ to~$x_{P'}$, where $x_{P'}$ denotes the point for $R'$ that $x_P$ denotes for~$R$.
Let $P'$ be the ray for~$R'$ that is $P$ for~$R$.
We conclude that there is a hyperbolic boundary point $\mu'$ such that $[\mu^k_{i+1},\mu']$ lies in a $2\delta$-neighborhood of $R\cup\pi_{R'}\cup P'$.
This gives us the following inequality for any $\nu\in\rand X$ with $\varepsilon_j\leq d_h(\mu^k_{i+1},\nu)\leq\varepsilon_{j-1}$.
$$\begin{array}{lll}
d_h(\mu^i_k,\mu)&\le&\exp(-\varepsilon(\mu^i_k,\mu))\\
&\leq&\exp(-\varepsilon(\mu^i_k,\nu)+5\varepsilon\delta)\\
&\leq&\frac{\delta'}{\varepsilon'}\exp(-\varepsilon(\mu^i_k,\nu))\\
&\leq&8d_h(\mu^i_k,\nu)\\
&\leq&8\varepsilon_{j-1}.
\end{array}$$

Let $m$ be minimal such that the $((m-1)\cdot 8\varepsilon_{j-1})$-multiplicity of~$\mu$ is not $m-1$ but such that the $(m\cdot 8\varepsilon_{j-1})$-multiplicity of~$\mu$ is~$m$.
If $m=1$, then the two boundary points $\mu^k_{i+1}$ and~$\mu$ lie in the same ball $B\in\BF_{j-1}$.
We conclude that all three hyperbolic boundary points $\mu^k_{i+1}$, $\mu$, and $\eta$ lie in a common ball $B\in\BF_{j-1}$ and hence that$$d_h(\eta,\mu^k_{i+1}) \leq \rad(\BF_{j-1})\leq 8N^2\varepsilon_{j-1}.$$
Let us now assume that $m\neq 1$.
We may assume that $\mu\neq\eta$, that is $\mu\in S_j\setminus S_{j-1}$.
By induction we know that $\eta$ lies in one of the elements of~$\BF_{j-1}$, say in $B$, that is responsible for the $(n\cdot8\varepsilon_{j-1})$-multiplicity of at most $n$ of~$\mu$ where $n$ denotes the corresponding value for~$\mu$ that is $m$ for~$\mu^k_{i+1}$.
As~$\mu^k_{i+1}$  is connected to~$\mu$, we have $n\leq m$.
Thus $d_h(\mu^k_{i+1},B)\leq m\cdot 8\varepsilon_{j-1}$ and hence there is
$$d_h(\mu^k_{i+1},\eta)\leq m\cdot8\varepsilon_{j-1}+\rad(\BF_{j-1})\leq m\cdot8\varepsilon_{j-1}+8N^2\varepsilon_{j-1}.$$
Since every element of $S_j\setminus S_{j-1}$ has $(8N^2\varepsilon_{j-1})$-multiplicity at most $N^2$ in~$\BF_{j-1}$, we have $d_h(\mu^k_{i+1},\eta)\leq 16N^2\varepsilon_{j-1}$.
\end{proof}

By the construction of the $\real$-trees $T_j$, we have the following property.
\begin{itemize}
\item[($*$)] In every step and for every closed ball $B\in\BF_k$ a boundary point in~$B$ can only be eventually connected to elements of at most $N^2$ different balls in~$\BF_k$.
Furthermore, there are at most $N^{\log_2(8N^2)}$ distinct boundary points in~$B$ that are eventually connected to elements of the same ball of~$\BF_k$.
\end{itemize}

Now we are ready to prove the assertions (1) to (3) for the $\real$-tree~$T$.
For a closed ball $B\in\BF_k$ let $B'$ be the union of~$B$ and all other (at most~$N^2$) closed balls in~$\BF_k$ with distance at most $8N^2\varepsilon_k$ to~$B$.

Because of~(\ref{item_RaysInTToS}) we just have prove that any ray that we created without our knowledge in the limit step converges to some hyperbolic boundary point.
Let $\pi$ be such a ray in~$T$.
We remark that we do not need any of the properties (1) to (3) for the proof of Lemma~\ref{lem_TreeIsQuasiGeo} which we apply at this place.
The lemma says that $\pi$ is eventually a quasi-geodetic ray.
We deduce from Proposition~\ref{prop_geodAndQuasigeod} that there is a geodetic ray~$\widehat{\pi}$ and a $\kappa\geq 0$ such that $\pi$ lies in a $\kappa$-neighborhood of~$\widehat{\pi}$.
Thus, $\pi$ converges to the same boundary as $\widehat{\pi}$ and we have proved (1).

\medskip

For the proof of~(2), let $\eta\in\rand X$.
In every construction step $k$ there is at least one closed ball $B_k\in\BF_k$ with $\eta\in B_k$ because $\BF_k$ is a cover of~$\rand X$.
Hence there is in each step a boundary point $\eta_k\in S_k\cap B_k$ with $d_h(\eta_k,\eta)\leq\varepsilon_k$ such that $T_k$ contains a ray to~$\eta_k$.
Let $\pi_k$ be a ray from $r$ to~$\eta_k$ in~$T_k$.
For every $\varrho\in\nat$ there is a path in $T_k\cap \bar{B}_\varrho(r)$ that is contained in infinitely many of the~$\pi_k$ by the compactness of~$\bar{B}_\varrho(r)$ and because there are only finitely many paths in~$T_j$ that starts at $r$ and end at a point with distance $\varrho$ from~$r$.
Thus there is a ray $\pi$ such that every point on~$\pi$ lies on infinitely many of the rays~$\pi_k$.
Because of Claim~\ref{clm_3rdClaim} and the choice of the rays $\pi_k$, the hyperbolic boundary point $\eta$ has to be an accumulation point of~$\pi$.
As (1) holds, $\pi$ has precisely one accumulation point, $\eta$, and thus $\pi$ converges to~$\eta$.

\medskip

For every $B\in\BF_k$ in the step $k$ there are at most $N^2$ closed balls in the step $k-1$ such that a boundary point in~$(B\cap S_k)\sm S_{k-1}$ is eventually connected to a hyperbolic boundary point of such a ball.
Furthermore, for each of these balls there are at most $N^{\log_2(8N^2)}$ many hyperbolic boundary points to which our new ones are eventually connected.
Thus we know that the number of rays to one boundary point is bounded by $N^2\cdot N^{\log_2(8N^2)}$ and hence bounded by a function depending only on $\dim_2(\rand X)$.
Thus, we have also proved the remaining assertion (3).

\section{Visual hyperbolic spaces}\label{sec_DefVisual}

As in \cite{BDS-Embedding}, we call a hyperbolic space $X$ {\em visual} if for some $o\in X$ there is a $D>0$ such that for every $x\in X$ there is an $\eta\in\rand X$ with$$d(o,x)\leq (x,\eta)_o+D.$$
Remark that the property for hyperbolic spaces to be visual is independent of the choice of $o$.

An observation is that the definition of visual hyperbolic spaces is equivalent to the following.
For some (and hence every) $o\in X$ there is a $D'>0$ such that for every $x\in X$ there is an $\eta\in\rand X$ such that any geodesic between $o$ and $x$ lies in a $D'$-neighborhood of a geodetic ray from $o$ to~$\eta$.

Remark that by Corollary~1.3.5.\ of \cite{BS-Elements} hyperbolicity is preserved by quasi-isometries and it is not hard to see that the same holds for visual hyperbolicity.

\subsection{Hyperbolic approximations of metric spaces}\label{sec_subsecApproximation}

In~\cite{BS-Elements}, see also \cite{BourdonPajot, BDS-Embedding,Elek-Cohomology}, the authors construct for every metric space $X$ a hyperbolic space $Y$ whose hyperbolic boundary is homeomorphic to~$X$.
The hyperbolic space $Y$ is called a {\em hyperbolic approximation of~$X$}.
That $Y$ is indeed a hyperbolic space is shown in~\cite[Proposition~6.2.10]{BS-Elements} and we just state the proposition without proof.
Remark that it is easy by looking at the construction to see that $Y$ is visual hyperbolic since any vertex of~$Y$ lies on an infinite geodetic ray that starts at the {\em root} of the hyperbolic approximation.

\begin{prop}\label{prop_BS6.2.10}{\em \cite[Proposition~6.2.10]{BS-Elements}}
A hyperbolic approximation $Y$ of any metric space $X$ is a visual hyperbolic graph with $\rand Y\isom X$.\qed
\end{prop}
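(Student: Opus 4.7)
The plan is to construct the hyperbolic approximation $Y$ explicitly from nested nets in $X$ and then verify the three assertions (graph structure with boundary identified, hyperbolicity, and visuality) in sequence. For the construction, fix a parameter $r\in(0,1/6)$ and, for each level $k\in\nat$, choose a maximal $r^k$-separated subset $V_k$ of $X$; set $VY:=\bigsqcup_k V_k$, pick a root $o\in V_0$, join every $v\in V_{k+1}$ by a vertical edge to a chosen nearest point of $V_k$ (its \emph{parent}), and join $v,w\in V_k$ by a horizontal edge whenever $d_X(v,w)\leq 2r^k$. For $v\in V_k$ write $|v|=k$ and let $\pi(v)\in X$ denote the underlying point.

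First I would establish the fundamental estimate $d_Y(v,w)=|v|+|w|-2k(v,w)\pm O(1)$, where $k(v,w)$ is, up to a bounded error, $\lceil-\log_r d_X(\pi(v),\pi(w))\rceil$ and is realised as the level of the first common ancestor of $v$ and $w$. In particular $(v,w)_o=k(v,w)\pm O(1)$. Hyperbolicity then follows from the four-point condition: among any four vertices the three relevant Gromov products are controlled by common-ancestor levels, and the two largest necessarily agree up to an additive constant depending only on~$r$.

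Visuality is essentially built into the construction. Any $v\in V_k$ initiates a descending sequence $v=v_k,v_{k+1},\ldots$ with $v_{m+1}$ a child of $v_m$; this is a vertical geodetic ray converging to some $\eta\in\rand Y$, and concatenation with a geodesic from $o$ to $v$ yields a geodetic ray through $v$, so $(v,\eta)_o=d_Y(o,v)$ and visuality holds with $D=0$. For the boundary identification, define $\Phi\colon\rand Y\to X$ by $\Phi([(v_i)])=\lim\pi(v_i)$; the estimate above forces $(\pi(v_i))$ to be Cauchy whenever $(v_i,v_j)_o\to\infty$ and gives injectivity, surjectivity follows from the maximality of each $V_k$ (any $x\in X$ is approximated to within $r^k$ by some $v\in V_k$, producing a sequence tending to $x$), and continuity in both directions is immediate from the same estimate.

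The main obstacle is the cluster of estimates underlying the first step, namely controlling how $d_Y$ simultaneously encodes the vertical ``scale'' and the horizontal $X$-distance information while absorbing the additive slack from arbitrary choices of nets and parent maps. Once those estimates are in hand, hyperbolicity, visuality, and the homeomorphism $\rand Y\isom X$ follow by structural arguments with only routine book-keeping.
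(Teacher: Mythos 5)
The paper does not actually prove this proposition: it is quoted verbatim from Buyalo and Schroeder and used as a black box, so the comparison here is between your sketch and the source \cite[Proposition~6.2.10]{BS-Elements}. Your construction (nested maximal $r^k$-separated nets, horizontal edges at comparable scale, vertical edges to the next level) is indeed the hyperbolic approximation of that reference, and your overall strategy---derive the distance formula $d_Y(v,w)=|v|+|w|-2k(v,w)+O(1)$ with $k(v,w)\approx\log_{1/r}\bigl(1/d_X(\pi(v),\pi(w))\bigr)$, then read off hyperbolicity, visuality and the boundary identification---is exactly how the result is proved there.

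As a proof, however, the proposal has a genuine gap, and you name it yourself: the distance formula is asserted rather than proved, and it carries essentially all of the content. Concretely, two separate estimates are needed. For the upper bound $d_Y(v,w)\le |v|+|w|-2k(v,w)+O(1)$ you must show that the level-$m$ ancestor of $v$ stays within $Cr^m$ of $\pi(v)$ (a geometric-series bound on the drift of the parent chain; this is where the restriction $r<1/6$ enters), so that the ancestors of $v$ and $w$ at level roughly $k(v,w)$ are joined by a bounded number of horizontal edges. For the lower bound you must show that a path that never descends below level $m_0$ moves the underlying point by at most $C\cdot(\text{its length})\cdot r^{m_0}$ in $X$, so that every $v$--$w$ path is forced down to level $k(v,w)+O(1)$. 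Neither is difficult, but without them hyperbolicity, the well-definedness, injectivity and continuity of $\Phi$, and the Gromov-product identity all remain unproved. Two smaller points: with vertical edges only to a \emph{chosen} parent, a vertex of $V_k$ may have no children, so the ``descending geodetic ray'' needs a repair (take $v_{m+1}\in V_{m+1}$ with $d_X(v_m,v_{m+1})\le r^{m+1}$, which is adjacent to its parent and hence within two edges of $v_m$; this yields visuality with $D=O(1)$ rather than $D=0$). Also, $\Phi([(v_i)])=\lim\pi(v_i)$ lands in the completion of $X$, so $\rand Y\isom X$ requires $X$ complete---harmless for this paper, which only applies the proposition to the compact space $\rand X$, but not literally true for ``any metric space''.
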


If we restrict the metric space $X$ to be doubling, then the degrees of all the vertices in a hyperbolic approximation of~$X$ are uniformly bounded by \cite[Proposition 8.3.3]{BS-Elements}.
We combine this result with Proposition~\ref{prop_BS6.2.10} and obtain the following proposition.

\begin{prop}\label{prop_BS6.2.10Cor}
A hyperbolic approximation $Y$ of any doubling metric space $X$ is a visual hyperbolic locally finite graph with $\rand Y\isom X$ and with bounded degree.\qed
\end{prop}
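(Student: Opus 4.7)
The plan is to assemble the proposition directly from two previously cited results, since the statement is advertised in the text as a combination of Proposition~\ref{prop_BS6.2.10} with \cite[Proposition~8.3.3]{BS-Elements}. There is essentially no new mathematics to produce: the argument is a verification that the hypotheses of each cited result are met in the present setting, followed by the observation that a graph of uniformly bounded degree is automatically locally finite.

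First I would let $Y$ denote a hyperbolic approximation of the doubling metric space $X$. Applying Proposition~\ref{prop_BS6.2.10} with no additional assumption on~$X$ already yields three of the desired conclusions: $Y$ is a graph, $Y$ is hyperbolic, $Y$ is visual, and there is a homeomorphism $\partial Y \cong X$. So after this step only the degree bound (and, consequently, local finiteness) remains to be established.

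Second, I would invoke \cite[Proposition~8.3.3]{BS-Elements}, whose hypothesis is precisely that the underlying metric space~$X$ be doubling, and whose conclusion is that the vertex degrees in the hyperbolic approximation~$Y$ are uniformly bounded by a constant depending only on the doubling constant of~$X$. Since every vertex of~$Y$ has degree bounded by some fixed integer, every closed ball of finite radius in the graph metric on~$Y$ contains only finitely many vertices, so~$Y$ is locally finite. Combining this degree bound with the conclusions of the previous paragraph gives all the asserted properties.

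The only point one might worry about is a notational one: ensuring that the ``doubling'' assumption used in \cite[Proposition~8.3.3]{BS-Elements} matches the definition recalled in Section~\ref{sec_Topology} above (equivalently, by Theorem~\ref{AssouadDoubling}, the assumption that $X$ has finite Assouad dimension). This is indeed the case, so no translation lemma is needed and the proof reduces to invoking the two results in sequence.
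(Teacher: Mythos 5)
Your proposal is correct and coincides with the paper's own argument: the proposition is obtained precisely by combining Proposition~\ref{prop_BS6.2.10} with \cite[Proposition~8.3.3]{BS-Elements}, the latter supplying the uniform degree bound from the doubling hypothesis, with local finiteness following immediately. No further comment is needed.
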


\subsection{Rough similarities}

We cite a result by Buyalo and Schroeder \cite{BS-Elements}.
In order to do that we have to make a further definition.

Let $X,Y$ be two metric spaces. If there are a map $f:X\to Y$ and constants $k,\lambda>0$ such that
$$|\lambda d_X(x,y)-d_Y(f(x),f(y))|\leq k$$
holds for all $x,y\in X$ and $\sup_{y\in Y}d_Y(y,f(X))\leq k$, then $X$ is {\em $(\lambda,k)$-roughly similar} to~$Y$, or just {\em roughly similar} to~$Y$, and we call $f$ a {\em $(\lambda,k)$-rough similarity}, or just a {\em rough similarity}.

In particular, every space $Y$ that is roughly similar to a space $X$ is also quasi-isometric to $X$.
As (visual) hyperbolicity is preserved by quasi-isometries, it is also preserved by rough similarities.

\begin{thm}{\em \cite[Corollary 7.1.5.]{BS-Elements}}\label{thm_BuSCor7.1.5}
Every visual hyperbolic space $X$ is roughly similar to a subspace of a hyperbolic geodesic space $Y$ with the same hyperbolic boundary, $\rand X=\rand Y$.\qed
\end{thm}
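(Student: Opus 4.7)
The plan is to take $Y$ to be a hyperbolic approximation of the boundary and embed $X$ into $Y$ using the visual structure. Equip $\rand X$ with a visual metric $d_\varepsilon$ (Theorem \ref{BoundComp}) and let $Y$ be the hyperbolic approximation of $(\rand X,d_\varepsilon)$, which by Proposition \ref{prop_BS6.2.10} is a visual hyperbolic graph, hence a hyperbolic geodesic space, with $\rand Y\isom\rand X$. Recall that the vertex set of $Y$ is stratified as $V_0\subseteq V_1\subseteq\cdots$, where $V_n$ is a maximal $r^n$-separated subset of $\rand X$ for a fixed $r\in(0,1)$, and edges join nearby vertices of the same or consecutive levels.

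Next, define $f\colon X\to Y$ as follows. Fix a basepoint $o\in X$ and identify it with the root of $Y$. For $x\in X$, visuality supplies $\eta_x\in\rand X$ with $d(o,x)\leq(x,\eta_x)_o+D$; set $n(x):=\lfloor\varepsilon\,d(o,x)/|\log r|\rfloor$ and let $f(x)$ be any vertex in $V_{n(x)}$ minimising $d_\varepsilon(\cdot,\eta_x)$. Put $Z:=f(X)$; since $f$ is surjective onto $Z$, the density condition of rough similarity is automatic, and it suffices to verify a bilipschitz-up-to-an-additive-constant estimate $|\lambda\,d_X(x,y)-d_Y(f(x),f(y))|\leq k$ for appropriate constants $\lambda,k>0$.

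The core computation is then as follows. A standard estimate for distances in a hyperbolic approximation (cf.\ \cite[Ch.~6]{BS-Elements}) gives, for $v\in V_n$ and $w\in V_m$ with associated boundary points $\xi_v,\xi_w$,
\begin{equation*}
d_Y(v,w)=n+m-2\min\bigl(n,m,-\log_r d_\varepsilon(\xi_v,\xi_w)\bigr)+O(1).
\end{equation*}
Combined with the comparison $d_\varepsilon(\eta,\mu)\asymp\exp(-\varepsilon(\eta,\mu)_o)$ of Theorem \ref{BoundComp}, this yields $d_Y(f(x),f(y))=\lambda\bigl(d(o,x)+d(o,y)-2(\eta_x,\eta_y)_o\bigr)+O(1)$ with $\lambda=\varepsilon/|\log r|$. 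Since $d_X(x,y)=d(o,x)+d(o,y)-2(x,y)_o$, the claim reduces to the uniform bound $|(x,y)_o-(\eta_x,\eta_y)_o|=O(1)$.

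The hard part is precisely this uniform bound. Visuality places $x$ and $y$ in $D$-neighbourhoods of geodetic rays from $o$ to $\eta_x$ and $\eta_y$ respectively; $\delta$-hyperbolicity of $X$ then forces the geodesic $[x,y]$ and a geodetic double ray $[\eta_x,\eta_y]$ (cf.\ Proposition \ref{prop_geodExists}) to fellow-travel with those rays on their common initial segment, so that $(x,y)_o$ agrees with $(\eta_x,\eta_y)_o$ up to $O(\delta+D)$. The one subtle configuration is when $\min(d(o,x),d(o,y))\leq(\eta_x,\eta_y)_o$, in which case the two Gromov products need not coincide; but then visuality forces the closer of $x,y$ to $o$ to itself lie near the shared initial segment of the two rays, keeping the discrepancy bounded. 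Plugging this back into the core computation gives the required rough-similarity inequality, finishing the proof.
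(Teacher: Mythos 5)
The paper itself offers no proof of this statement: it is imported verbatim from Buyalo--Schroeder \cite[Corollary~7.1.5]{BS-Elements} with a reference in place of an argument, so there is no internal proof to compare against. Your overall architecture --- take $Y$ to be a hyperbolic approximation of $(\rand X,d_\varepsilon)$ and send $x\in X$ to a vertex of level $n(x)\approx\lambda d(o,x)$ near the boundary point $\eta_x$ supplied by visuality --- is exactly the strategy used in the source, so the plan is sound.

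However, your core computation contains a step that fails. The distance formula in the approximation is $d_Y(v,w)=n+m-2\min(n,m,\ell)+O(1)$ with $r^{\ell}\asymp d_\varepsilon(\xi_v,\xi_w)$, and you may not discard the truncation by $\min(n,m)$; yet your displayed conclusion $d_Y(f(x),f(y))=\lambda\bigl(d(o,x)+d(o,y)-2(\eta_x,\eta_y)_o\bigr)+O(1)$ does exactly that, and is valid only when $\min(d(o,x),d(o,y))\geq(\eta_x,\eta_y)_o+O(1)$. Consequently the ``uniform bound'' $|(x,y)_o-(\eta_x,\eta_y)_o|=O(1)$ to which you reduce the theorem is false: take $\eta_x=\eta_y$ (or two boundary points that are extremely close) and $x=o$; then $(x,y)_o=0$ while $(\eta_x,\eta_y)_o$ is arbitrarily large, and no appeal to visuality repairs this, since visuality only places $x$ near a ray, it does not push $x$ out to where the two rays separate. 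The correct way to close the argument is to prove instead that $(x,y)_o=\min\bigl(d(o,x),d(o,y),(\eta_x,\eta_y)_o\bigr)+O(\delta+D)$ --- which does follow from visuality together with thin triangles with ideal vertices --- and then to observe that the very same truncated minimum, rescaled by $\lambda$, appears in the distance formula for $Y$ because $n(x)\approx\lambda d(o,x)$ and $\ell\approx\lambda(\eta_x,\eta_y)_o$; this yields $d_Y(f(x),f(y))=\lambda d_X(x,y)+O(1)$ in all configurations, not just the generic one. (A smaller slip: with $r\in(0,1)$ and $d_\varepsilon$ bounded by $1$, the separation level is $\log_r d_\varepsilon(\xi_v,\xi_w)\geq 0$, not $-\log_r d_\varepsilon(\xi_v,\xi_w)$.)
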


We conclude the following corollary from the previous theorem.

\begin{cor}\label{cor_BuSCor7.1.5}
Let $X$ be a proper hyperbolic geodetic space whose hyperbolic boundary is doubling.
Let $\gamma_1\geq 1,\gamma_2\geq 0$ be constants.
Then there is a subspace $Y$ of~$X$ such that the following statements hold for~$Y$.
\begin{enumerate}[$(1)$]
\item $Y$ is a proper visual hyperbolic geodetic space;
\item every $(\gamma_1,\gamma_2)$-quasi-geodetic ray of~$X$ lies eventually in~$Y$;
\item the identity $\iota:Y\to X$ extends to a homeomorphism $\hat{\iota}:\widehat{Y}\to\widehat{X}$ such that $\hat{\iota}(\rand Y)=\rand X$.
\end{enumerate}
\end{cor}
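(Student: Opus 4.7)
The plan is to construct $Y$ as a suitable thickening of the \emph{visual hull} of $\rand X$ inside $X$: namely, a bounded neighborhood of the union of geodetic rays from a fixed basepoint $o$ to all boundary points, closed under taking geodesics in~$X$. First I would fix a basepoint $o \in X$ and, by Proposition~\ref{prop_geodExists}, choose a geodetic ray $\gamma_\eta$ from $o$ to each $\eta \in \rand X$. Set $Y_0 := \bigcup_\eta \gamma_\eta$ and let $\kappa = \kappa(\delta, \gamma_1, \gamma_2)$ be the constant from Proposition~\ref{prop_geodAndQuasigeod}. Starting from the closed $\kappa$-neighborhood $N_\kappa(Y_0) \sub X$, I would iteratively adjoin, for every pair of points already in the current set, a chosen geodesic between them in~$X$, and then take the closure in~$X$. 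The key observation that makes this iteration stabilize is that in a $\delta$-hyperbolic space a geodesic between two points lying within distance $R$ of $Y_0$ stays within distance $R + O(\delta)$ of $Y_0$ (by thin-triangle comparisons with geodesics through~$o$), so the iteration converges to a closed set $Y$ contained in some $N_{\kappa'}(Y_0)$ that is geodetic by construction.

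Once $Y$ is defined, I would verify (1)--(3) in turn. For~(1): $Y$ is proper as a closed subspace of the proper space $X$; geodetic by construction; $\delta$-hyperbolic since geodesics in~$Y$ are geodesics in~$X$, so thin triangles pass to~$Y$; and visual because every $p \in Y$ is within $\kappa'$ of some $\gamma_\eta \sub Y$, whence combining Proposition~\ref{LimesOfGromovProd} with the $1$-Lipschitz dependence of the Gromov product on its arguments yields $d(o,p) \leq (p,\eta)_o + D$ for some $D = D(\kappa',\delta)$. For~(2): any $(\gamma_1,\gamma_2)$-quasi-geodetic ray in $X$ converges to some $\mu \in \rand X$ and, by Proposition~\ref{prop_geodAndQuasigeod}, lies eventually within distance $\kappa$ of $\gamma_\mu \sub Y_0$, hence eventually in $N_\kappa(Y_0) \sub Y$. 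For~(3): since $Y \sub X$ carries the induced metric, Gromov products and the equivalence relations defining boundary points agree on sequences in~$Y$, so the inclusion induces an injection $\hat\iota:\widehat Y \to \widehat X$ which is the identity on~$Y$; surjectivity onto $\rand X$ holds because each $\gamma_\eta \sub Y$ represents $\eta$, and continuity of this bijection between the compact metric boundaries then follows from Theorem~\ref{BoundComp}.

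The hard part will be controlling the geodesic-closure step so that the resulting $Y$ is simultaneously geodetic, visual, and closed in~$X$. The a priori risk is that iteratively adjoining geodesics thickens $Y$ unboundedly, destroying the bound ``every point within $\kappa'$ of some $\gamma_\eta$'' on which visibility depends. This is overcome by the quasi-convexity of a star of geodetic rays emanating from a common basepoint in a hyperbolic space: thin-triangle estimates bound the additional thickening from each iteration by $O(\delta)$, and a careful accounting then shows that after passing to the limit the total thickening is still uniformly bounded, so that properness, geodeticity, and visibility all coexist in the final $Y$.
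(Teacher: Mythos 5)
Your construction takes a genuinely different route from the paper, which obtains $Y$ by taking a hyperbolic approximation $Z$ of the doubling space $\rand X$ (Proposition~\ref{prop_BS6.2.10Cor}), embedding it into $X$ as a roughly similar subspace $Z'$ via Theorem~\ref{thm_BuSCor7.1.5}, and thickening $Z'$; note that your direct construction never invokes the doubling hypothesis at all, which is a warning sign, since that hypothesis is exactly what the paper's route consumes. The serious problem, however, is the stabilization claim on which everything else in your argument rests. You assert that iteratively adjoining geodesics to $N_\kappa(Y_0)$ converges to a set contained in some $N_{\kappa'}(Y_0)$ because ``thin-triangle estimates bound the additional thickening from each iteration by $O(\delta)$.'' That is precisely the estimate that does \emph{not} suffice: if $H_{n+1}$ is obtained from $H_n$ by adjoining all geodesics between points of $H_n$, the thin-triangle comparison through nearby points of $Y_0$ gives only $H_{n+1}\sub N_{R_n+c\delta}(Y_0)$ when $H_n\sub N_{R_n}(Y_0)$, so the naive accounting yields $R_n\sim n\delta$, which diverges. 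Quasi-convexity of the star $Y_0$ (or of its one-step geodesic join) gives a uniform bound on the \emph{first} join only; uniform boundedness of the \emph{iterated} join is a genuinely harder statement, and the obvious strengthenings of the induction hypothesis (e.g.\ ``each point has a witness pair in $Y_0$ with Gromov product $O(\delta)$'') fail to close near the endpoints of the adjoined geodesics, where the product bound degrades additively at each level. Without this step you do not get $Y\sub N_{\kappa'}(Y_0)$, and then the visual property of $Y$ --- the entire point of the corollary, since it is what Theorem~\ref{thm_mainPart1} needs --- is unproved. (Properness, hyperbolicity, and assertions (2) and (3) of your write-up are essentially fine, modulo small constant adjustments in (2), but they are not the hard part.)

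To repair this you would need either a real proof that the iterated geodesic hull of $Y_0$ stays in a uniformly bounded neighborhood of $Y_0$ (this is true in $\mathbb{H}^n$ and is plausible in general $\delta$-hyperbolic geodesic spaces, but it requires a nearest-point-projection or approximation-tree argument that controls the accumulation of errors, not a per-step $O(\delta)$ bound), or a different definition of $Y$ for which geodeticity is automatic. The paper sidesteps the issue entirely: the hyperbolic approximation $Z$ is a graph, hence geodetic for structural reasons, and the rough similarity of Buyalo--Schroeder transports it into $X$; the price is the doubling hypothesis and the imported machinery of Section~\ref{sec_DefVisual}. As it stands, your proposal replaces that machinery with an unproved quasi-convex-hull lemma, so there is a genuine gap.
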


\begin{proof}
Let $Z$ be a visual hyperbolic locally finite graph that is a hyperbolic approximation of the hyperbolic boundary $\rand X$.
Let $Z'$ be a subspace of~$X$ that is $(\lambda,k)$-roughly similar to~$Z$ for some constants $\lambda\geq 1,k\geq 0$ by Theorem~\ref{thm_BuSCor7.1.5}, and let $Y$ be the subspace of~$X$ that is induced by~$Z'$ and all points with distance at most $\kappa(\delta,\gamma_1,\gamma_2)+2\,\kappa(\delta,\lambda,k)$ to any element of~$Z'$ for the constants $\kappa(\delta,\gamma_1,\gamma_2),\kappa(\delta,\lambda,k)$ of Proposition~\ref{prop_geodAndQuasigeod}.
Since $Z$ is locally finite and $X$ is proper, the space $Z'$ is proper and the same holds for~$Y$.
As $Z$ is visual hyperbolic and this is a property that is preserved by quasi-isometries, assertion (1) holds for $Z'$ and thus also for $Y$ as the identity from $Z'$ to~$Y$ is a quasi-isometry by the choice of~$Y$.
The assertion (2) holds because of Proposition~\ref{prop_geodAndQuasigeod} since $Z$ is a geodetic space, and hence every two points of~$Z'$ can be joined by a $(\lambda,k)$-quasi-geodesic.
Finally, the assertion (3) is obvious, because quasi-isometries between proper hyperbolic geodetic spaces can be extended to quasi-isometries between their hyperbolic compactifications.
\end{proof}

\section{Tree-likeness of hyperbolic spaces}

We remark that the constructed tree in~\cite{HyperbolicSpanningTrees} for locally finite hyperbolic graphs is in general far from having only rays that are eventually quasi-geodetic.
But the changes in the construction we made in this paper are strong enough to guarantee that all rays in the constructed $\real$-tree are already eventually quasi-geodetic rays in the hyperbolic space.

\begin{lem}\label{lem_TreeIsQuasiGeo}
Let $X$ be a proper hyperbolic geodetic space whose hyperbolic boundary has finite Assouad dimension and let $T$ be the $\real$-tree that was constructed in Section~\ref{Construction} with root~$r$.
There exist constants $\gamma_1\geq 1,\gamma_2\ge 0$ such that every ray in~$T$ starting at the root is a $(\gamma_1,\gamma_2)$-quasi-geodetic ray in $X$.
\end{lem}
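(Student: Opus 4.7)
The plan is to show that $t\le \gamma_1\,d_X(r,\pi(t))+\gamma_2$ for all $t\ge 0$, with universal $\gamma_1\ge 1$, $\gamma_2\ge 0$; combined with the immediate bound $d_X\le d_T$, this yields the $(\gamma_1,\gamma_2)$-quasi-geodetic property, and the same argument applied to the sub-ray starting at $\pi(s)$ gives the full inequality for arbitrary $s$.

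First I will describe the structure of $\pi$. Parameterized by arc length, $\pi$ is an alternating concatenation of geodetic segments $G_0,G_1,\ldots$ in $X$---each contained either in $T_0$ or in one of the geodetic double rays $R$ from the construction---and detour arcs $\pi_{R_1},\pi_{R_2},\ldots$ of length at most $\delta$. Writing $x_k$ for the $k$-th switch point and $j_k$ for the step at which the corresponding insertion was made, the construction gives $d_X(r,x_k)\le Q_{j_k}+5\delta$. Using Proposition~\ref{geodIn4Delta} and Theorem~\ref{BoundComp} one computes $Q_j\le\tfrac{1}{\varepsilon}\ln(\varepsilon'/\varepsilon_j)+4\delta$ and $q_j\ge\tfrac{1}{\varepsilon}\ln(1/\varepsilon_{j-1})$, so the annulus width $W:=\sup_j(Q_j-q_j)$ is a universal constant, since the ratio $\varepsilon_{j-1}/\varepsilon_j=512N^2$ is independent of $j$. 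Consequently both endpoints of each piece $G_k\subseteq R_k$ lie within $W+O(\delta)$ of the closest-to-$r$ point of $R_k$.

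Next I will bound the arc length by $X$-distance. Each $G_k$ is a geodesic in $X$, and as $\pi$ traverses $G_k$ its radial distance $d_X(r,\cdot)$ either changes monotonically or performs a single swing toward and then away from $r$; in either case
\[
    \ell(G_k)\ \le\ \bigl|d_X(r,x_{k+1})-d_X(r,y_k)\bigr|\ +\ 2(W+O(\delta)).
\]
Summing telescopically over $k$ and adding the detour contributions yields $t\le d_X(r,\pi(t))+O(m\,(W+\delta))$, where $m$ is the number of switches of $\pi$ in $[0,t]$.

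It remains to bound $m$ linearly in $d_X(r,\pi(t))$; this is the main obstacle. Since $Q_j=\tfrac{1}{\varepsilon}\ln(1/\varepsilon_j)+O(\delta)=j\cdot\tfrac{1}{\varepsilon}\ln(512N^2)+O(1)$ grows linearly in $j$, any switch at step $j$ lies at $X$-distance at most $O(j)$ from $r$. The delicate point is that several switches of $\pi$ can occur at the same step; the per-step count must be bounded by a universal constant $M=M(N)$. This uses the doubling property of $\rand X$ (Lemma~\ref{LS2.3}) together with property~$(*)$ in the construction, which restricts how many distinct elements of $S_j\setminus S_{j-1}$ a single outgoing ray can be (eventually) connected to within one construction step. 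Combined with the linear growth of $Q_j$, this yields $m\le C_1\,d_X(r,\pi(t))+C_2$, and substituting into the previous display closes the proof.
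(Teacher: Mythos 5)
Your plan rests on the same two pillars as the paper's proof: the uniform ``annulus width'' $W=Q_j-q_j=\beta$ supplied by Proposition~\ref{prop_QuotientLeadsToDistance} (so that each attachment event costs only an additive $O(\delta+\beta)$), and the bound $M=N^2N^{\log_2(8N^2)}$ on the number of attachment events per construction step along a single branch, which comes from Lemma~\ref{LS2.3} and property~($*$). Where you differ is the bookkeeping. The paper fixes $w\in[r,y]_T$ and proves, by induction over the attachment events, that $d_{T^{i,k}_j}(w,y)\le d(w,y)+(\text{count})(75\delta+4\beta)$, via a chain of explicit thin-triangle comparisons between the entry point, the bottom point $a$ of the double ray, and the projections $a_w,a'_w$ (Claims~\ref{clm_TreeIsQuasiGeo1}--\ref{clm_TreeIsQuasiGeo4}). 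You instead telescope the radial distance $d(r,\cdot)$ over the geodesic pieces, which compresses those four claims into a single Gromov-product estimate; you also make explicit the final conversion--- that $q_j$ grows linearly in $j$, so the additive error $m\cdot O(\delta+\beta)$ is linear in $d(r,\pi(t))$---which the paper disposes of in one sentence. Both routes ultimately defer the per-step count to the same counting argument from the construction, so neither is more self-contained on that point.

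Two statements in your sketch need repair before this is a proof. First, it is not true that \emph{both} endpoints of $G_k$ lie within $W+O(\delta)$ of the closest-to-$r$ point of $R_k$: the exit point $x_{k+1}$ is the attachment point of a ray inserted at a possibly much later step $j_{k+1}>j_k$ and can sit arbitrarily far out along $R_k$ (if it could not, every $G_k$ would have length $O(W+\delta)$ and $\pi$ would be bounded). Only the entry point $y_k$ has this property, but that is all your displayed inequality actually needs, since $\ell(G_k)=d(r,y_k)+d(r,x_{k+1})-2(y_k,x_{k+1})_r$ and, by Proposition~\ref{LimesOfGromovProd}, $(y_k,x_{k+1})_r\ge d(r,R_k)-2\delta\ge d(r,y_k)-W-O(\delta)$. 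Second, the reduction to arbitrary $s<t$ is not literally ``the same argument applied to the sub-ray starting at $\pi(s)$'', because your annuli are centred at $r$, not at $\pi(s)$. The fix is to run the telescoping from $r$ over $[s,t]$ and use $d(r,\pi(t))-d(r,\pi(s))\le d(\pi(s),\pi(t))$, together with the observation that the steps $j$ contributing switches inside $[s,t]$ form an interval $[j_{\min},j_{\max}]$ with $q_{j_{\max}}\le d(r,\pi(t))+O(\delta)$ and $Q_{j_{\min}}\ge d(r,\pi(s))-O(\delta)$, so that $m\le M(j_{\max}-j_{\min}+1)\le C_1\,d(\pi(s),\pi(t))+C_2$. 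With these two adjustments the argument closes.
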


\begin{proof}
We assume all assumptions and notations as in the construction step $j$ of Section~\ref{Construction}.
By Proposition~\ref{prop_QuotientLeadsToDistance} there is a constant $\beta$ depending only on the quotient $\frac{\varepsilon_j}{\varepsilon_{j-1}}$ and not depending on the particular $j$ such that for every four boundary points $\eta_1,\eta_2,\eta_3,\eta_4$ with
$$\varepsilon_{j-1}\geq d_h(\eta_1,\eta_2),d_h(\eta_3,\eta_4)\geq\varepsilon_j$$
there is
$$|d(r,[\eta_1,\eta_2])-d(r,[\eta_3,\eta_4])|\leq\beta.$$
Recall that $\beta=Q-q$ with $Q,q$ as defined in Section~\ref{Construction}.
In the first step of the proof we shall prove that for every two points $w,y$ with $y\in T^{i,k}_j\setminus T^{i-1,k}_j$, $w\in T^{i,k}_j\cap [r,y]_T$ there is
$$d_Y(w,y)\leq d(w,y)+(M+n)(75\delta+4\beta)$$
with $Y:=T^{i,k}_j$ for an $n<M:=N^2N^{\log_2(8N^2)}$ that represents the number how often we have already enlarged the tree $T_{j-1}$ by additional rays whose intersection with $[r,y]_T$ is not empty.
As we have proved, $n$ is bounded by $M$ and since we add just in this step a ray, there is $n<M$.

So let $y\in Y$.
Let $R$ be the geodetic double ray, as in the recursion step and let $P$ be that geodetic ray with $P\subseteq R$ that we added together with $\pi_R$ to $T^{i-1,k}_j$ to obtain $T^{i,k}_j$.
Let $x$ be the unique point in $T^{i-1,k}_j\cap\pi_R$, let $x'$ be the unique point in $\pi_R\cap P$, and let $a$ be a point on $R$ with minimal distance to~$r$.
By the choice of $\pi_R$ we have $d(x,x')\leq\delta$ as we already saw in Section~\ref{Construction}.

Let $\eta:=\mu^i_k$ and let $\mu$ be the other limit point of~$R$. Let $b\in R$ with $d(b,[r,\eta])\leq\delta$ and $d(b,[r,\mu])\leq\delta$.

\begin{clm}\label{clm_TreeIsQuasiGeo1}
$d(a,b)\leq 4\delta$.
\end{clm}

\begin{proof}[Proof of Claim~\ref{clm_TreeIsQuasiGeo1}]
Let $c\in[r,\eta]$ and $c'\in[r,\mu]$ each with minimal distance to~$b$.
By the choice of~$b$ there is $d(b,c),d(b,c')\leq\delta$.

By the hyperbolicity of~$X$ the geodetic double ray $R$ is contained in the $\delta$-neighborhood of the subset $Z:=[\eta,c]\cup[c,b]\cup[b,c']\cup[c',\mu]$ of~$X$.
In particular we have $d(a,Z)\leq\delta$.
Thus there is a point on $Z$ with distance at most $d(r,a)+\delta$ to~$r$.
Let $a'\in Z$ with $d(a,a')\leq\delta$.
Then there is $d(r,a')\leq d(r,a)+\delta$.
By symmetry we may assume that $a'\in[\eta,c]\cup[c,b]$.
If $a'\in [c,b]$, then we have $d(a',c)\leq \delta$.
Otherwise, since $c$ is the point on $[r,\nu]\cap Z$ with minimal distance to~$r$, and since $d(r,c),d(r,c')\geq d(r,a)-\delta$, we have $d(a',c)\leq 2\delta$.
The inequality
$$d(a,b)\leq d(a,a')+d(a',c)+d(c,b)\leq\delta+2\delta+\delta=4\delta$$
proves Claim~\ref{clm_TreeIsQuasiGeo1}.
\end{proof}

For any another point $\widehat{a}$ on~$R$ with distance $d(r,a)$ to~$r$ we conclude from Claim~\ref{clm_TreeIsQuasiGeo1} that $d(a,\widehat{a})\leq 8\delta$.

\begin{clm}\label{clm_TreeIsQuasiGeo2}
$d(a,x')\leq\beta+15\delta$.
\end{clm}

\begin{proof}[Proof of Claim~\ref{clm_TreeIsQuasiGeo2}]
Let $x''$ be the point on $[r,x']$ with $d(r,x'')=d(r,a)-\delta$. In particular we have $d(x',x'')\leq\beta+6\delta$.
Since $X$ is hyperbolic, there is a point on $[r,a]\cup[a,x']$ with distance at most $\delta$ to $x''$.
If this point lies on $[r,a]$, then $d(x'',a)\leq 3\delta$, and if this point lies on $[a,x']$, then it has the same distance to~$r$ as~$a$ and hence distance at most $8\delta$ to~$a$.
Thus $d(x'',a)\leq 9\delta$.
Hence we proved Claim~\ref{clm_TreeIsQuasiGeo2}.
\end{proof}

Let $a_w$ be a point on~$R$ with $d(w,a_w)=d(w,R)$.

\begin{clm}\label{clm_TreeIsQuasiGeo3}
$d(w,y)\geq d(w,a_w)+d(a_w,y)-6\delta$.
\end{clm}

\begin{proof}[Proof of Claim~\ref{clm_TreeIsQuasiGeo3}]
Let $y'$ a point on $[w,y]$ with distance at most $\delta$ to both $[w,a_w]$ and $[a_w,y]$.
Let $y_1$ be such a point on $[w,a_w]$ and $y_2$ such a point on $[a_w,y]$.
Then $d(w,y_2)\geq d(w,a_w)$ and hence $d(y_1,a_w)\leq 2\delta$.
This immediately implies $d(w,y')\geq d(w,a_w)-3\delta$ and also $d(y,y')+3\delta\geq d(a_w,y)$.
Hence we proved Claim~\ref{clm_TreeIsQuasiGeo3}
\end{proof}

\begin{clm}\label{clm_TreeIsQuasiGeo4}
$d(a,a_w)\leq 19\delta+\beta$.
\end{clm}

\begin{proof}[Proof of Claim~\ref{clm_TreeIsQuasiGeo4}]
Since $X$ is hyperbolic we conclude directly, that $[a,a_w]$ lies in a $2\delta$-neighborhood of $[a,r]\cup[r,w]\cup[w,a_w]$.
But since $d(r,a)=d(r,R)$ and $d(w,a_w)=d(w,R)$, a part of length at most $4\delta$ of $[a,a_w]$ lies in the $2\delta$-neighborhood of $[r,a]$ and a part of length at most $4\delta$ lies in the $2\delta$-neighborhood of~$[w,a_w]$.
The point $w$ lies in $[r,y]\cap T^{i-1,k}_j$ and hence $d(r,w)\leq d(r,a)+\beta+5\delta$, so there is at most a part of length $11\delta+\beta$ of~$[a,a_w]$ in a $2\delta$-neighborhood of~$[r,w]$.
Then $d(a,a_w)$ is at most $19\delta+\beta$.
This proves Claim~\ref{clm_TreeIsQuasiGeo4}.
\end{proof}

Let $a'_w$ be a point on~$P$ with minimal distance to~$w$.
By analog arguments as in Claim~\ref{clm_TreeIsQuasiGeo2} there is $d(a_w,a'_w)\leq\beta+15\delta$.
Then we conclude
$$\begin{array}{lll}d_Y(w,y)&=&d_Y(w,a'_w)+d_Y(a'_w,y)\\
&\leq&d_Y(w,x)+d(x,x')+d(x',a)+d(a,a_w)+d(a_w,a'_w)\\
&&+d(a'_w,y)\\
&\leq&d_Y(w,x)+\delta+\beta+15\delta+\beta+19\delta+d(a_w,y)\\
&\leq&d_Y(w,x)+34\delta+2\beta+d(a_w,y)\\
&\leq&d(w,x)+((j-(j'+1))M+n)(\alpha_1\delta+\alpha_2\beta)\\
&&+34\delta+2\beta+d(a_w,y)\\
&\leq&d(w,a_w)+d(a_w,a)+d(a,x)+d(a_w,y)+34\delta+2\beta\\
&&((j-(j'+1))M+n)(\alpha_1\delta+\alpha_2\beta)\\
&\leq&d(w,a_w)+d(a_w,y)+19\delta+\beta+\beta+16\delta+34\delta+2\beta\\
&&((j-(j'+1))M+n)(\alpha_1\delta+\alpha_2\beta)\\
&\leq&d(w,y)+6\delta+69\delta+4\beta+((j-(j'+1))M+n)(\alpha_1\delta+\alpha_2\beta)\\
&\leq&d(w,y)+((j-(j'+1))M+(n+1))(\alpha_1\delta+\alpha_2\beta)
\end{array}$$
with $\alpha_1=75$ and $\alpha_2=4$.
And in particular we have
$$d_Y(w,y)\leq d(w,y)+((j-(j'+1))M+(n+1))(75\delta+4\beta).$$

Let $\pi$ be a ray in~$T$ that starts at~$r$.
Since every step affects at most its previous and its successive step directly, there are constants $c_1,c_2$ (independent from the choice of~$\pi$) such that $\pi$ is a $(c_1,c_2)$-quasi-geodetic ray.
\end{proof}

This lemma enables us to prove our main result. We will prove it in two steps.
First we prove the result for proper visual hyperbolic geodetic spaces and then for arbitrary proper hyperbolic geodetic spaces.

\subsection{The case: visual hyperbolic spaces}

Visual hyperbolic spaces seem to have a treelike-structure, since there is a maximal distance from each point to any ray starting at the same vertex.
This in fact is the main reason why the $\real$-tree constructed in Section~\ref{Construction} points out the tree-likeness of visual hyperbolic spaces.
This is specified in Theorem~\ref{thm_mainPart1}.

For a hyperbolic space $X$ and a subspace $T$ of~$X$, we say that the {\em canonical map from $\rand T$ to~$\rand X$ exists} if the identity $\iota:T\to X$ extends to a continuous map $\hat{\iota}:\widehat{T}\to\widehat{X}$ such that $\hat{\iota}(\rand T)=\rand X$ and $\hat{\iota}|_{\rand T}$ is the {\em canonical map} from $\rand T$ to~$\rand X$.

\begin{thm}\label{thm_mainPart1}
Let $X$ be a proper visual hyperbolic geodetic space whose hyperbolic boundary has finite Assouad dimension.
Then there is an $\real$-tree $T\sub X$ such that the canonical map $\gamma$ from $\rand T$ to~$\rand X$ exists and has the following properties.
\begin{enumerate}[{\em (i)}]
\item It is surjective;
\item there is a constant $M<\infty$ depending only on the Assouad dimension of $\rand X$ such that $\gamma\inv(\eta)$ has at most $M$ elements for each $\eta\in\rand X$;
\item there is a constant $\Delta<\infty$ depending only on~$\delta$ and on the Assouad dimension of~$\rand X$ such that every point of~$X$ lies in a $\Delta$-neighborhood of a point of~$T$.
\end{enumerate}
\end{thm}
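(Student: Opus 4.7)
The plan is to assemble the theorem by wrapping properties (1)--(3) from the construction in Section~\ref{Construction} together with Lemma~\ref{lem_TreeIsQuasiGeo}; visuality enters only in the proof of assertion~(iii).

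First I would define $\gamma$. I identify $\rand T$ with the set of rays in~$T$ that start at the root~$r$ (this is canonical because $T$ is an $\real$-tree, so each end has a unique such representative). By property~(1) of the construction, each such ray converges to a unique point of~$\rand X$; this yields a map $\gamma:\rand T\to\rand X$ and hence a candidate extension $\hat\iota$ of the identity~$\iota:T\to X$. To verify continuity at the boundary, I would take two rays $\pi_1,\pi_2$ from~$r$ in~$T$ that coincide on an initial $T$-segment of length~$L$. By Lemma~\ref{lem_TreeIsQuasiGeo} both rays are $(\gamma_1,\gamma_2)$-quasi-geodetic in~$X$, so any point on the shared initial segment at $T$-distance~$L$ from~$r$ has $X$-distance at least $L/\gamma_1-\gamma_2$ from~$r$. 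A standard estimate then forces $(\gamma(\pi_1),\gamma(\pi_2))_r\to\infty$ as $L\to\infty$, and Theorem~\ref{BoundComp} converts this into $d_h$-convergence, yielding continuity of~$\hat\iota$.

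With $\gamma$ in place, assertion~(i) is immediate from property~(2) and assertion~(ii) is immediate from property~(3) with $M=N^{2+\log_2(8N^2)}$; the dependence on the Assouad dimension is through $N=2^{\dim_2(\rand X)}$ together with Theorem~\ref{AssouadDoubling}. For assertion~(iii) I would use that visuality is base-point independent and take~$r$ itself as the base-point in the definition. Given $x\in X$, the equivalent formulation of visuality in Section~\ref{sec_DefVisual} yields $\eta\in\rand X$ and a geodetic ray~$\sigma$ from~$r$ to~$\eta$ such that $d(x,\sigma)\leq D'$. By property~(2) there is a ray~$\pi$ in~$T$ from~$r$ to~$\eta$, which by Lemma~\ref{lem_TreeIsQuasiGeo} is $(\gamma_1,\gamma_2)$-quasi-geodetic in~$X$. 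Since $\pi$ and~$\sigma$ share both their initial point~$r$ and their limit point~$\eta$, Proposition~\ref{prop_geodAndQuasigeod} supplies a constant $\kappa=\kappa(\delta,\gamma_1,\gamma_2)$ with $\sigma$ in the $\kappa$-neighborhood of~$\pi$, and therefore $d(x,T)\leq d(x,\pi)\leq D'+\kappa=:\Delta$.

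The only step that I expect to require genuine care is the continuity of~$\gamma$ at the boundary, because that is the one place where the tree topology on~$\rand T$, the quasi-geodetic bound of Lemma~\ref{lem_TreeIsQuasiGeo}, and the metric~$d_h$ on~$\widehat X$ all have to be reconciled; assertions (i)--(iii) are then direct appeals to the construction, to Proposition~\ref{prop_geodAndQuasigeod}, and to the defining property of a visual hyperbolic space.
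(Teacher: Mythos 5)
Your proposal is correct and follows essentially the same route as the paper: the tree is the one from Section~\ref{Construction}, assertions (i) and (ii) are read off from properties (1)--(3) of that construction, and (iii) is obtained by combining visuality with Lemma~\ref{lem_TreeIsQuasiGeo} and Proposition~\ref{prop_geodAndQuasigeod} exactly as you describe (the paper writes $d(x,x_T)\le\kappa+D$ where you write $d(x,T)\le D'+\kappa$). Your extra paragraph verifying continuity of the boundary extension via the Gromov product is more detail than the paper supplies, but it is consistent with, not a departure from, its argument.
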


\begin{proof}
Let $T$ be the $\real$-tree constructed in Section~\ref{sec_Construction} with root~$r$.
We already proved in that section the properties (i) and (ii). For the remaining one we remember from Lemma~\ref{lem_TreeIsQuasiGeo} that there exist constants $c_1,c_2$ such that each ray in~$T$ that starts at the root is a $(c_1,c_2)$-quasi-geodetic ray.
Because $X$ is visual hyperbolic, there is a $D>0$ such that for every $x\in X$ there is an $\eta\in\rand X$ with $d(x,\pi)\leq D$ for all geodetic rays $\pi$ from $r$ to~$\eta$.
Let $\pi_x$ be a point on~$\pi$ with $d(x,\pi_x)\leq D$.
In~$T$ there is a ray $\pi_T$ from~$r$ converging to~$\eta$.
We know from Proposition~\ref{prop_geodAndQuasigeod} that there is a point $x_T$ on~$\pi_T$ with $d(\pi_x,x_T)\leq \kappa$ for a constant $\kappa$ that depends only on $\delta$, $c_1$, and $c_2$.
Hence we have $d(x,x_T)\leq \kappa+D$ and (iii) is proved.
\end{proof}

\subsection{The case: hyperbolic spaces}

The final aim of the first part of this paper is to demonstrate the tree-likeness of hyperbolic spaces in terms of contained $\real$-trees.
For that we combine the result for the visual hyperbolic spaces with the theorems from Section~\ref{sec_DefVisual}.

Before we can state the main result, we have to make a further definition.
A subset $Y$ of a hyperbolic geodetic space $X$ has {\em finite geodetic out-spread} if every geodesic in $X\sm Y$ has finite length.

\begin{thm}\label{thm_mainPart2}
Let $X$ be a proper hyperbolic geodetic space whose hyperbolic boundary has finite Assouad dimension.
Then there is an $\real$-tree $T\sub X$ such that the canonical map $\gamma$ from $\rand T$ to~$\rand X$ exists and has the following properties.
\begin{enumerate}[{\em (i)}]
\item It is surjective;
\item there is a constant $M<\infty$ depending only on the Assouad dimension of $\rand X$ such that $\gamma\inv(\eta)$ has at most $M$ elements for each $\eta\in\rand X$;
\item there is a constant $\Delta<\infty$ depending only on~$\delta$ and on the Assouad dimension of~$\rand X$ such that the set $X\sm B_D(T)$ has finite geodetic out-spread.
\end{enumerate}
\end{thm}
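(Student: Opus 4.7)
The strategy is to reduce to the visual case via Corollary~\ref{cor_BuSCor7.1.5} and then transport the tree produced by Theorem~\ref{thm_mainPart1} back into $X$. Concretely, I would apply Corollary~\ref{cor_BuSCor7.1.5} with $(\gamma_1,\gamma_2)=(1,0)$ to obtain a subspace $Y\sub X$ that is itself a proper visual hyperbolic geodetic space, that absorbs every geodetic ray of $X$ eventually, and for which the inclusion extends to a homeomorphism $\hat{\iota}:\widehat{Y}\to\widehat{X}$ with $\hat{\iota}(\rand Y)=\rand X$. Since $\rand Y\isom\rand X$ as topological (in fact metric) spaces, $\rand Y$ still has finite Assouad dimension, and the hyperbolicity constant of $Y$ together with $\dim_A(\rand Y)$ are controlled by $\delta$ and $\dim_A(\rand X)$. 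Applying Theorem~\ref{thm_mainPart1} to $Y$ yields an $\real$-tree $T\sub Y\sub X$ together with a canonical map $\gamma_Y:\rand T\to\rand Y$ and a constant $\Delta_0$ satisfying the three properties of that theorem relative to $Y$.

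For (i) and (ii) I would set $\gamma:=\hat{\iota}|_{\rand Y}\circ\gamma_Y$. Because $\gamma_Y$ is continuous and extends the inclusion $T\hookrightarrow Y$, and $\hat{\iota}$ is continuous and extends the inclusion $Y\hookrightarrow X$, the composition $\gamma$ is a continuous extension of the inclusion $T\hookrightarrow X$ to $\widehat{T}\to\widehat{X}$, hence is the canonical map in the sense of the definition preceding Theorem~\ref{thm_mainPart1}. Since $\hat{\iota}|_{\rand Y}$ is a homeomorphism onto $\rand X$, surjectivity of $\gamma_Y$ gives surjectivity of $\gamma$, and injectivity of $\hat{\iota}|_{\rand Y}$ shows that $|\gamma\inv(\eta)|=|\gamma_Y\inv(\hat{\iota}\inv(\eta))|\leq M$ for the constant $M$ supplied by Theorem~\ref{thm_mainPart1}(ii), which depends only on $\dim_A(\rand Y)=\dim_A(\rand X)$.

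For (iii) I would take $\Delta$ to be the constant $\Delta_0$ from Theorem~\ref{thm_mainPart1}(iii) applied to $Y$; by the preceding remark this depends only on $\delta$ and $\dim_A(\rand X)$. Theorem~\ref{thm_mainPart1}(iii) gives $Y\sub B_{\Delta}(T)$, where the neighborhood is taken with respect to the ambient metric of $X$ (the inclusion is an isometric embedding, so $Y$\nobreakdash-distance to $T$ dominates the $X$\nobreakdash-distance). Consequently $X\sm B_{\Delta}(T)\sub X\sm Y$. Now suppose a geodesic $\pi\sub X\sm B_{\Delta}(T)$ has infinite length; then $\pi$ contains a geodetic ray, which is $(1,0)$-quasi-geodetic, and Corollary~\ref{cor_BuSCor7.1.5}(2) forces this subray to lie eventually in $Y$, contradicting $\pi\sub X\sm Y$. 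Hence every geodesic in $X\sm B_{\Delta}(T)$ has finite length, which is the desired finite geodetic out-spread. The only genuinely delicate point is the verification that the constants produced by Theorem~\ref{thm_mainPart1} for $Y$ depend only on the data of $X$; this is just bookkeeping, since the quasi-isometry type of $Y$ in $X$ (and hence its hyperbolicity constant) is governed by $\delta$ and the doubling constant of $\rand X$ through the hyperbolic approximation used in the proof of Corollary~\ref{cor_BuSCor7.1.5}.
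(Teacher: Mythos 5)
Your proposal is correct and follows essentially the same route as the paper: pass to a proper visual hyperbolic geodetic subspace $Y$ via Corollary~\ref{cor_BuSCor7.1.5}, apply Theorem~\ref{thm_mainPart1} inside $Y$, and deduce (iii) from $X\sm B_\Delta(T)\sub X\sm Y$ together with the fact that $X\sm Y$ has finite geodetic out-spread. You merely spell out a few steps the paper leaves implicit (the composition with $\hat{\iota}$ for the canonical map, and the derivation of finite geodetic out-spread of $X\sm Y$ from assertion (2) of the corollary).
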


\begin{proof}
By Corollary~\ref{cor_BuSCor7.1.5}, there is a proper visual hyperbolic geodetic subspace $Y$ of~$X$ which has the property that every geodesic in $X\sm Y$ has finite length, so $X\sm Y$ has finite geodetic out-spread.
In $Y$ there is an $\real$-tree $T$ as in Theorem~\ref{thm_mainPart1}.
For this $\real$-tree the canonical map $\rand T\to\rand X$ exists and is surjective and continuous by property (i) of Theorem~\ref{thm_mainPart1}.
Furthermore, (ii) also holds because it holds for~$T$ and~$Y$.
The remaining part is a consequence of the fact that $X\sm Y$ has already finite geodetic out-spread, so in particular $X\sm \bar{B}_\Delta(T)$ (with the constant $\Delta$ from Theorem~\ref{thm_mainPart1}) has finite geodetic out-spread, because $X\sm Y\supseteq X\sm \bar{B}_\Delta(T)$.
\end{proof}

\section{The topological dimension of the boundary}\label{LowerBoundSection}

Before we prove the main result of this section, we have to define the topological dimension of a topological space $X$.
A {\em refinement} $\UF$ of an open cover $\VF$ of $X$ is an open cover of $X$ such that for every $U\in \UF$ there is a $V\in\VF$ with $U\subseteq V$.
$X$ has {\em topological dimension at most $n$} if every open cover has a refinement such that each $x\in X$ lies in at most $n+1$ elements of the refinement, and $X$ has {\em topological dimension $n$} if it has topological dimension at most $n$ but not topological dimension at most $n-1$.
If there exists no $n\in\nat$ such that $X$ has topological dimension at most $n$ then $X$ has {\em infinite topological dimension}.
Remark, that we always have $\dim X\leq\dim_A X$ by~\cite[Facts~3.3]{Luukainen}.

\begin{lem}\label{totDisToBoundary}
Let $X$ be a compact metric space such that there exists a totally disconnected compact metric space $Y$ and an equivalence relation $\sim$ on~$Y$ with at most $M<\infty$ elements in each equivalence class such that $X$ and $Y/\sim$ are homeomorphic.
Then $X$ has topological dimension at most $M-1$.
\end{lem}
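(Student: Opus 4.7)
The key observation is that $Y$, being compact metric and totally disconnected, is zero-dimensional: every open cover of $Y$ refines to a finite partition by pairwise disjoint clopen sets. Let $q\colon Y\to X$ denote the quotient map realizing the homeomorphism $X\isom Y/\!\sim$.

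The plan is to verify the definition of topological dimension directly. Given any finite open cover $\VF=\{V_1,\ldots,V_k\}$ of $X$ (it suffices to handle finite covers, by compactness of $X$), the pullback $\{q\inv(V_i)\}$ is an open cover of $Y$, which by the observation above refines to a finite clopen partition $\{U_1,\ldots,U_N\}$ with each $U_j\sub q\inv(V_{\sigma(j)})$ for some index map $\sigma\colon\{1,\ldots,N\}\to\{1,\ldots,k\}$. Setting $F_j:=q(U_j)$, the $F_j$ are closed in $X$ (continuous images of compact sets in a Hausdorff space), satisfy $F_j\sub V_{\sigma(j)}$, and cover $X$; and since the $U_j$ partition $Y$ while $|q\inv(x)|\leq M$ for every $x\in X$, each $x$ lies in at most $M$ of the $F_j$. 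Thus $\{F_j\}$ is a closed refinement of $\VF$ in which every point belongs to at most $M$ members.

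It remains to replace the closed $F_j$ by open sets $G_j$ with $F_j\sub G_j\sub V_{\sigma(j)}$ still enjoying the bound that every point lies in at most $M$ of them; this produces an open refinement of $\VF$ of multiplicity at most $M$, and hence $\dim X\leq M-1$. For this I would invoke the classical swelling lemma of dimension theory (see, e.g., Engelking, \emph{General Topology}, Theorem~7.1.4): in any normal space, a finite closed family of order at most $n$, with each member contained in a prescribed open set, can be enlarged to an open family inside those open sets and still of order at most $n$. Since compact metric spaces are normal, the lemma applies to our $\{F_j\}$ and $\{V_{\sigma(j)}\}$ with $n=M-1$ and delivers the required $\{G_j\}$.

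The main obstacle I expect is the swelling step itself. Its proof proceeds by finite induction on $j$: once $F_1,\ldots,F_{j-1}$ have been swelled to open $G_1,\ldots,G_{j-1}$ so that $\{G_1,\ldots,G_{j-1},F_j,\ldots,F_N\}$ still has multiplicity at most $M$, one must produce an open $G_j\sub V_{\sigma(j)}$ containing $F_j$ that avoids the set $Z_j$ of points lying in $M$ members of $\{G_1,\ldots,G_{j-1},F_{j+1},\ldots,F_N\}$. The inductive multiplicity bound gives $F_j\cap Z_j=\es$; the delicate point is the closedness of $Z_j$, which is ensured by tracking only those $(M+1)$-wise intersections where the new open set $G_i$ can replace $F_i$ in the family, and is the standard content of the swelling lemma.
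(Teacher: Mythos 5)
Your proposal is correct, and its skeleton coincides with the paper's: pull the cover back to~$Y$, use zero-dimensionality of the totally disconnected compactum to refine it by a finite partition into clopen sets, push forward, and use the bound $M$ on the fibres of $q$ to control how many images can contain a given point. The genuine difference lies in how the push-forward is handled. The paper simply declares the images $q(V')$ of the clopen pieces to be open and takes them as the desired refinement; but a quotient map need not send clopen sets to open sets (for the Cantor set mapped onto $[0,1]$ by identifying endpoints of deleted intervals, the image of the clopen piece below $1/3$ is a nondegenerate closed interval that is not open), so as written that step only yields a \emph{closed} refinement of order at most $M$. You recognize exactly this, record that the $F_j=q(U_j)$ are merely closed, and then pass back to an open refinement via the classical swelling lemma for finite closed families in normal spaces, which preserves both the order bound and the containment $F_j\sub G_j\sub V_{\sigma(j)}$. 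This costs one extra classical ingredient but closes the gap; alternatively one could phrase the conclusion through the standard equivalence, for compact metric spaces, between the open-cover and closed-cover (or finite closed shrinking) characterizations of covering dimension, which is what the swelling lemma encapsulates. Your remark that it suffices to treat finite covers, by compactness, is also the right reduction. In short: same strategy, but your version supplies the justification the paper's final step is missing.
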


\begin{proof}
Let $\UF$ be a finite critical open cover of~$X$.
Let $\UF'$ be that open cover of $Y$ that is induced by~$\UF$, that is a $U\in\UF$ corresponds to precisely one $U'\in\UF'$ and $y\in U'$ if and only if $[y]\in U$ (where we assume that $X=Y/\sim$).
As $Y$ has topological dimension $0$, there is a finite open cover $\VF'$ of~$\UF'$ with pairwise disjoint elements, since it is a well-know fact, that any totally disconnected compact metric space has topological dimension $0$.
For any $V'\in\VF'$ let $V$ be the set of all $[y]$ with $y\in V'$.
Let $\VF$ be the set of all such sets $V$ for $V'\in\VF'$.
Any $V$ is an open set and thus $\VF$ is an open cover of $X$.
By the construction $\VF$ is also a refinement of $\UF$ and has multiplicity at most $M$.
Thus the topological dimension of~$X$ is at most $M-1$.
\end{proof}

\begin{thm}
Let $X$ be a proper hyperbolic geodetic space and let $T$ be an $\real$-tree. Assume that there exists a canonical map from $\rand T$ to $\rand X$ such that there is an $M<\infty$ such that every hyperbolic boundary point of~$X$ has at most $M$ preimages in $\rand T$.
Then the topological dimension of~$\rand X$ is at most $M-1$.
\end{thm}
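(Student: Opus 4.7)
The plan is to invoke Lemma~\ref{totDisToBoundary} with $Y := \rand T$ equipped with some metric $d_\varepsilon$, and with the equivalence relation $\eta \sim \mu \Leftrightarrow \hat\iota(\eta) = \hat\iota(\mu)$, where $\hat\iota : \widehat T \to \widehat X$ is the continuous extension from the canonical-map hypothesis. By assumption each $\sim$-class has at most $M$ elements, so the conclusion $\dim \rand X \le M-1$ will follow once we check that $\rand T$ is a compact totally disconnected metric space and that $\rand X$ is homeomorphic to $\rand T/{\sim}$.

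Since every $\real$-tree is a $0$-hyperbolic geodetic space, Theorem~\ref{BoundComp} endows $\rand T$ with the compact metric $d_\varepsilon$ (this uses that $T$ is proper, the delicate point discussed below). For total disconnectedness, fix a base-point $r\in T$; given distinct $\eta,\mu\in\rand T$, let $v$ be a point on the unique arc of $T$ between the geodetic rays from $r$ to $\eta$ and to $\mu$ that lies strictly beyond their last common point. The set $A$ of those $\xi\in\rand T$ whose geodetic ray from $r$ enters the same component of $T\sm\{v\}$ as the ray to $\eta$ is both open and closed in $\rand T$ (two boundary points close in $d_\varepsilon$ have large Gromov product and hence their geodetic rays from $r$ agree past $v$), and it separates $\eta$ from $\mu$.

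For the quotient identification, the restriction $f := \hat\iota|_{\rand T} : \rand T \to \rand X$ is a continuous surjection from a compact space onto a Hausdorff space, hence closed and therefore a quotient map. Its fibres are precisely the $\sim$-classes, so $\rand T/{\sim} \isom \rand X$ canonically, and Lemma~\ref{totDisToBoundary} delivers the bound $\dim \rand X \le M - 1$.

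The main point requiring care is the compactness of $\rand T$: a general $\real$-tree (say a star with uncountably many rays emanating from a central vertex) is not proper, and then $\rand T$ need not be compact. In the setting of this paper the tree $T$ arises as a closed subtree of the proper space $X$ and so is proper itself, so Theorem~\ref{BoundComp} applies; at the present level of generality one either adds this as a standing assumption or replaces $T$ by a proper subtree whose boundary still surjects onto $\rand X$ with the same $M$-to-one property, after which the argument above goes through unchanged.
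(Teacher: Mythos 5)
Your proposal is correct and follows the same route as the paper, whose entire proof is a one-line appeal to Lemma~\ref{totDisToBoundary} after asserting that $\rand T$ is a compact, totally disconnected metric space. The details you supply --- the clopen separation of ends in $\rand T$, the closed-map/quotient identification of $\rand X$ with $\rand T/\!\sim$, and in particular the observation that compactness of $\rand T$ requires $T$ to be proper (e.g.\ a closed subtree of the proper space $X$), which the paper leaves implicit --- are all sound and genuinely sharpen the argument rather than alter it.
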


\begin{proof}
Since $\rand T$ and $\rand X$ are compact metric spaces and $\rand T$ is totally disconnected, the theorem is a direct consequence of Lemma~\ref{totDisToBoundary}.
\end{proof}

In the terms of~\cite{HyperbolicSpanningTrees} where we investigated spanning trees of locally finite hyperbolic graphs we obtain an analog result.
A {\em spanning tree} of a graph is a subgraph on all vertices of the graph that is a tree.

\begin{thm}
Let $G$ be a locally finite hyperbolic graph and let $T$ be a spanning tree of $G$ such that the canonical map from $\rand T$ onto $\rand G$ exists and such that there is an $M<\infty$ such that any boundary point of $G$ has at most $M$ preimages.
Then the topological dimension of $\rand G$ is at most $M-1$.\qed
\end{thm}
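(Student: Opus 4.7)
The plan is to reduce this statement directly to the preceding theorem. Viewing the locally finite graph $G$ as a $1$-complex with edges of unit length makes it a proper hyperbolic geodetic space in the sense of the paper; similarly, a spanning tree $T$ of $G$, regarded as a $1$-complex with the induced metric, is a $0$-hyperbolic geodetic space and therefore an $\real$-tree. Since $T$ contains every vertex of~$G$ and the inclusion $T\hookrightarrow G$ is an isometry on~$T$, the hypothesis of the previous theorem on the existence of a canonical surjective map $\rand T\to\rand G$ with at most $M$ preimages per point is precisely what we are assuming here.

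With this translation, the final theorem is an immediate corollary of the preceding theorem, whose only non-formal ingredient is Lemma~\ref{totDisToBoundary}. To apply that lemma, one needs $\rand T$ to be a totally disconnected compact metric space, which for the $1$-complex of a locally finite tree is the classical fact that its end space is a compact metric space of topological dimension~$0$ (it is homeomorphic to the inverse limit of the finite sets of vertices at distance $n$ from the root). Then $\rand G$ is homeomorphic to the quotient $\rand T/\!\sim$, where $\eta\sim\eta'$ iff they have the same image under the canonical map; by assumption every equivalence class has at most $M$ elements, so Lemma~\ref{totDisToBoundary} yields $\dim(\rand G)\leq M-1$.

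The only delicate point I foresee is checking that the canonical map $\rand T\to \rand G$ behaves as required when $T$ is interpreted as an $\real$-tree rather than as a combinatorial tree; but the notions of ends of a locally finite tree and of hyperbolic boundary of its $1$-complex agree canonically, and the extension of the inclusion $T\hookrightarrow G$ to the boundaries is the same map in both settings, so nothing beyond the previous theorem is needed. Accordingly the proof reduces to a one-line invocation of Theorem~\ref{totDisToBoundary} (or equivalently of the immediately preceding theorem), exactly as the author indicates with \textsc{qed}.
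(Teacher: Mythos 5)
Your proposal is correct and follows exactly the route the paper intends: the theorem is stated with an immediate \qed precisely because it reduces, as you argue, to Lemma~\ref{totDisToBoundary} via the facts that $\rand T$ is a compact, totally disconnected metric space and that the canonical map (a continuous surjection from a compact space onto a Hausdorff one) exhibits $\rand G$ as the quotient $\rand T/\!\sim$ with classes of size at most $M$. Your filled-in details, including the identification of the end space of the locally finite tree with the hyperbolic boundary of its $1$-complex, are accurate and add nothing beyond what the paper's one-line reduction requires.
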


\providecommand{\bysame}{\leavevmode\hbox to3em{\hrulefill}\thinspace}
\providecommand{\MR}{\relax\ifhmode\unskip\space\fi MR }
\providecommand{\MRhref}[2]{%
  \href{http://www.ams.org/mathscinet-getitem?mr=#1}{#2}
}
\providecommand{\href}[2]{#2}

\end{document}